\documentclass[11pt]{article}

\usepackage{amsmath,amssymb,amsthm}
\usepackage{mathrsfs}
\usepackage[utf8]{inputenc}
\usepackage[T1]{fontenc}
\usepackage{geometry}
\usepackage{hyperref}
\usepackage{enumitem}

\newtheorem{theorem}{Theorem}[section]
\newtheorem{lemma}[theorem]{Lemma}
\newtheorem{proposition}[theorem]{Proposition}
\newtheorem{corollary}[theorem]{Corollary}
\newtheorem{conjecture}[theorem]{Conjecture}

\theoremstyle{definition}
\newtheorem{definition}[theorem]{Definition}
\newtheorem{remark}[theorem]{Remark}

\newtheorem{justification}[theorem]{Justification}
\newtheorem{interpretation}[theorem]{Interpretation}

\newcommand{\Q}{\mathbb{Q}}
\newcommand{\C}{\mathbb{C}}

\title{Informational Cardinality: A Unifying Framework for Set Theory, Fractal Geometry, and Analytic Number Theory}

\author{Zhengqiang Li\\
\texttt{lqint@coc.edu.rs}}

\date{}

\begin{document}

\maketitle

\begin{abstract}
This paper introduces the concept of informational cardinality to provide a unified measure of complexity for mathematical structures. Informational cardinality is a triple $I(M)=(\alpha(M),\delta(M),\iota(M))$, where $\alpha(M)$ is the cardinality indicator, $\delta(M)$ is the Hausdorff dimension, and $\iota(M)$ is the information measure. We construct the essential fractal prime set $P_{\text{ess}}$, a Cantor-like set with Hausdorff dimension $1/2$ whose structure encodes the fundamental dichotomy of primes modulo 4, and define its information measure as $\iota(P_{\text{ess}})=-\zeta(1/2)$. In comparison with the classical generalized Cantor set $C_{1/3}$ (with $\delta=1/3$, $\iota=0$), we prove $I(P_{\text{ess}}) > I(C_{1/3})$ under the lexicographic order, demonstrating that higher geometric complexity corresponds to greater informational content even for sets of equal cardinality. We further propose the fractal zero set $Z_F$, constructed from the imaginary parts of Riemann zeta zeros, and conjecture the Information Conservation Law: $\iota(P_{\text{ess}}) + \iota(Z_F) = 0$. This framework reveals deep connections between prime distribution, zeta function zeros, and fractal geometry, offering a novel geometric perspective on the Riemann Hypothesis.
\end{abstract}

\noindent\textbf{Keywords:} Informational cardinality, fractal dimension, prime numbers, Riemann zeta function, information measure, Hausdorff dimension, Cantor sets, Riemann Hypothesis, noncommutative geometry

\section{Introduction}

The concept of cardinality, introduced by Cantor, revolutionized mathematics by providing a means to compare the ``sizes'' of infinite sets. However, traditional cardinality fails to distinguish between sets that share the same cardinality but possess fundamentally different geometric structures. The unit interval $[0,1]$ and the Cantor middle-third set $C$ both have the cardinality of the continuum $\mathfrak{c}$, yet the former is connected and uncountable while the latter is totally disconnected and has Lebesgue measure zero. This observation suggests that a more nuanced measure of mathematical complexity is needed---one that incorporates not only set-theoretic size but also geometric structure and information content.

The distribution of prime numbers represents one of the most profound mysteries in mathematics, intimately connected to the zeros of the Riemann zeta function $\zeta(s)$. The Riemann Hypothesis, which posits that all non-trivial zeros of $\zeta(s)$ lie on the critical line $\text{Re}(s)=\frac{1}{2}$, has resisted proof for over a century, suggesting that new perspectives may be needed. Meanwhile, fractal geometry, pioneered by Mandelbrot, provides powerful tools for analyzing irregular, self-similar structures that defy traditional Euclidean description.

This paper synthesizes these seemingly disparate domains by introducing informational cardinality, a triple $I(M)=(\alpha(M),\delta(M),\iota(M))$ that simultaneously captures:
\begin{itemize}
\item $\alpha(M)$: The cardinality indicator (distinguishing countable from uncountable)
\item $\delta(M)$: The Hausdorff dimension (quantifying geometric complexity)
\item $\iota(M)$: The information measure (encoding connections to deep number-theoretic structures)
\end{itemize}

Our central construction is the essential fractal prime set $P_{\text{ess}}$, a Cantor-like set with Hausdorff dimension $\frac{1}{2}$ whose recursive structure embodies the uniform distribution of primes modulo 4. We define its information measure as $\iota(P_{\text{ess}})=-\zeta(1/2) \approx 1.4603545\ldots$, directly linking its geometric structure to the value of the Riemann zeta function at the critical point. Comparing $P_{\text{ess}}$ with the classical generalized Cantor set $C_{1/3}$ (with $\delta=\frac{1}{3}$, $\iota=0$), we establish that $I(P_{\text{ess}}) > I(C_{1/3})$ under lexicographic ordering, demonstrating that sets with richer mathematical content exhibit greater informational cardinality even when they share the same traditional cardinality.

We further construct the fractal zero set $Z_F$ from the imaginary parts of $\zeta(s)$'s non-trivial zeros and conjecture that it satisfies $\iota(Z_F)=\zeta(1/2)=-\iota(P_{\text{ess}})$, leading to the Information Conservation Law: $\iota(P_{\text{ess}}) + \iota(Z_F) = 0$. This suggests a profound duality between prime distribution and zero distribution mediated through fractal geometry. We formulate a geometric version of the Riemann Hypothesis: the conjecture holds if and only if $Z_F$ exhibits specific statistical self-similarity properties.

The paper is structured as follows: Section 2 formally defines informational cardinality and establishes its basic properties. Section 3 constructs $P_{\text{ess}}$ and analyzes its mathematical properties. Section 4 presents $C_{1/3}$ as a comparison object. Section 5 compares their informational cardinalities. Section 6 introduces $Z_F$ and formulates the Information Conservation Conjecture. Section 7 develops an axiomatic foundation for the information measure. Section 8 analyzes the sensitivity and robustness of our constructions. Section 9 explores connections to other mathematical frameworks. Section 10 concludes with implications and future directions.

\section{Informational Cardinality: Definitions and Basic Properties}

\subsection{Motivation and Historical Context}

The limitations of traditional cardinality are well-documented. While the Continuum Hypothesis remains independent of ZFC, even within ZFC, the binary distinction between countable and uncountable fails to capture the rich structure of mathematical objects. Fractal geometry provides tools like Hausdorff dimension to quantify geometric complexity, but a unified framework connecting set theory, geometry, and information has been lacking.

Our approach draws inspiration from:
\begin{enumerate}
\item \textbf{Algorithmic Information Theory}: The Kolmogorov complexity of an object measures the length of its shortest description, but is uncomputable and often context-dependent.
\item \textbf{Fractal Geometry}: Hausdorff and box-counting dimensions quantify self-similarity and irregularity.
\item \textbf{Analytic Number Theory}: Special values of L-functions encode deep arithmetic information.
\item \textbf{Noncommutative Geometry}: Connes' spectral interpretation of zeta zeros suggests geometric origins for arithmetic phenomena.
\end{enumerate}

Informational cardinality synthesizes these perspectives into a single, comparable measure.

\subsection{Formal Definitions}

\begin{definition}[Cardinality Indicator]
For a set $M$, define
\[
\alpha(M) = 
\begin{cases}
0, & |M| \le \aleph_0, \\
1, & |M| > \aleph_0.
\end{cases}
\]
\end{definition}

This binary indicator avoids entanglement with the Continuum Hypothesis while distinguishing fundamentally different sizes. In refined applications (see Appendix B), $\alpha(M)$ can be extended to incorporate Borel hierarchies or descriptive set-theoretic complexity, providing a more nuanced classification of infinite cardinalities.

\begin{definition}[Geometric Dimension]
For $M \subset \mathbb{R}^n$, define $\delta(M)=\dim_H(M)$, the Hausdorff dimension. When multiple fractal dimensions are relevant (box dimension, packing dimension, etc.), we may consider the dimension vector $\vec{\delta}(M)=(\dim_H(M), \dim_B(M), \dim_P(M), \ldots)$, but for initial comparisons, $\delta(M)=\dim_H(M)$ suffices.
\end{definition}

\begin{definition}[Information Measure - Preliminary]
For mathematical structures with deep connections to the Riemann zeta function or other L-functions, we define the information measure as
\[
\iota(M) = \varepsilon_M \cdot L(s_M),
\]
where $L$ is an appropriate L-function, $s_M$ is a characteristic complex parameter, and $\varepsilon_M \in \{\pm1\}$ is a sign determined by the structure's properties. For structures without known L-function connections, $\iota(M)=0$ by default. Section 7 provides a complete axiomatization.
\end{definition}

\begin{remark}[Distinction from Classical Information Theory]
The information measure $\iota(M)$ introduced here is fundamentally distinct from Shannon entropy in classical information theory or von Neumann entropy in quantum information theory. While Shannon entropy $H(X) = -\sum p_i \log p_i$ quantifies uncertainty or communication capacity in probabilistic systems, and quantum entropy measures entanglement and quantum correlations, our information measure $\iota(M)$ quantifies the \textit{arithmetic-geometric information content} of mathematical structures through their connections to L-functions. Specifically:
\begin{itemize}
\item \textbf{Shannon/Quantum Information}: Measures uncertainty, communication capacity, or quantum correlations in physical or probabilistic systems.
\item \textbf{Our Information Measure}: Captures the depth of number-theoretic and geometric structure encoded in mathematical objects, independent of probability distributions or physical realizations.
\end{itemize}
The term ``information'' here reflects the philosophical view that mathematical structures possess intrinsic informational content related to their arithmetic complexity, not their role in communication or computation.
\end{remark}

\begin{definition}[Informational Cardinality]
The informational cardinality of a mathematical structure $M$ is the triple
\[
I(M) = (\alpha(M),\ \delta(M),\ \iota(M)).
\]
When multiple fractal dimensions are considered, we write $I_{\text{extended}}(M)=(\alpha(M),\vec{\delta}(M),\iota(M))$.
\end{definition}

\begin{definition}[Comparison Rule]
Informational cardinalities are compared lexicographically: $I(M_1) > I(M_2)$ iff there exists $i \in \{1,2,3\}$ such that for all $j < i$, $I_j(M_1) = I_j(M_2)$ and $I_i(M_1) > I_i(M_2)$. For extended versions, we compare dimension vectors componentwise or via aggregate measures.
\end{definition}

\subsection{Philosophical Interpretation}

The three components of informational cardinality represent different aspects of mathematical ``size'':
\begin{itemize}
\item $\alpha(M)$: Set-theoretic size---Counting elements (distinguishing countable from uncountable)
\item $\delta(M)$: Geometric size---Measuring complexity in space
\item $\iota(M)$: Arithmetic size---Quantifying connection to deep number theory
\end{itemize}

The lexicographic ordering reflects a hierarchy of importance: set-theoretic size dominates geometric size, which in turn dominates arithmetic information. This hierarchy is natural when comparing mathematical structures---first we ask if one is ``larger'' in the traditional sense, then if it's more geometrically complex, and finally if it contains more arithmetic information.

\section{The Essential Fractal Prime Set $P_{\text{ess}}$}

\subsection{Motivation and Construction}

Prime numbers exhibit rich structure beyond simple enumeration. The prime number theorem $\pi(x) \sim x/\log x$ describes their asymptotic density, but deeper patterns emerge in their distribution modulo integers. The simplest non-trivial case is modulo 4: aside from 2, all primes are congruent to either 1 or 3 modulo 4, and Dirichlet's theorem guarantees both residue classes contain infinitely many primes. Chebyshev's bias---that primes congruent to 3 mod 4 initially outnumber those congruent to 1 mod 4---adds further nuance.

We distill the essential structure of prime distribution modulo 4 into a deterministic fractal construction:

\begin{definition}[Essential Fractal Prime Set]
Define recursively:

$I_0 = [0,1]$

For $n \ge 1$, given $I_{n-1}$ as a union of $2^{n-1}$ disjoint closed intervals of length $4^{-(n-1)}$, divide each interval into four equal subintervals of length $4^{-n}$, labeled 0, 1, 2, 3 from left to right. Retain only the subintervals labeled 1 and 3 (corresponding to the reduced residue classes modulo 4).

Let $I_n$ be the union of retained subintervals at stage $n$.

Define $P_{\text{ess}} = \bigcap_{n=1}^\infty I_n$.
\end{definition}

\begin{remark}[Idealization]
This construction is an idealization that captures the uniform distribution aspect while ignoring Chebyshev's bias and other fluctuations. The deterministic rule (always keeping positions 1 and 3) represents the ``essential'' structure before statistical fluctuations. Section 8 analyzes the effects of incorporating probabilistic elements.
\end{remark}

\subsection{Mathematical Properties}

\begin{theorem}[Basic Properties]
$P_{\text{ess}}$ is a compact, perfect, nowhere dense subset of $[0,1]$ with Lebesgue measure zero.
\end{theorem}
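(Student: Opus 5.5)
We will verify the four properties directly from the nested-interval description of $P_{\text{ess}}$, working with $I_n$ for $n\ge 0$. Each $I_n$ is a finite union of $2^n$ pairwise disjoint closed intervals of length $4^{-n}$, so it is closed, and $I_n \subset I_{n-1}$. Hence $P_{\text{ess}}=\bigcap_n I_n$ is closed in $[0,1]$ and therefore compact. Nonemptiness comes from Cantor's intersection theorem applied to a nested chain of nonempty closed intervals, one chosen at each stage inside the previous one. A convenient explicit description is that $P_{\text{ess}}$ is exactly the set of $x=\sum_{k\ge1} d_k 4^{-k}$ with every $d_k\in\{1,3\}$. For the base-$4$ digit argument we should check that no point has two admissible expansions, so that the stage-$n$ interval containing $x$ is determined by $(d_1,\dots,d_n)$. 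This holds because the retained subintervals at each stage are separated by gaps of positive length $4^{-n}$; for example, $[1/4,2/4]$ and $[3/4,1]$ are separated by $(2/4,3/4)$.

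For Lebesgue measure, we use $\lambda(I_n)=2^n\cdot 4^{-n}=2^{-n}\to0$ together with monotonicity, which gives $\lambda(P_{\text{ess}})=0$. Nowhere density then follows quickly, since a closed set of measure zero contains no open interval. Alternatively, any open interval $U$ meeting $P_{\text{ess}}$ has some length $\ell>0$. Once $4^{-n}<\ell/4$, $U$ contains a whole stage-$n$ subinterval together with part of an adjacent discarded gap, so $U\not\subset P_{\text{ess}}$. Because $P_{\text{ess}}$ is closed, this shows its interior is empty.

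Perfectness is the step that needs slightly more care, though it remains routine. Given $x\in P_{\text{ess}}$ with digit sequence $(d_k)$ and $\varepsilon>0$, choose $n$ with $4^{-n}<\varepsilon$. Define $y$ by flipping the digit $d_{n+1}$ between $1$ and $3$ and keeping all other digits. Then $y\in P_{\text{ess}}$, $y\neq x$ by uniqueness of admissible expansions, and both points lie in the same stage-$n$ interval, so $|x-y|\le 4^{-n}<\varepsilon$. Thus $x$ is not isolated, and $P_{\text{ess}}$ is closed with no isolated points, hence perfect. The main point to state cleanly is the digit characterization and its uniqueness. Everything else follows from the interval-length count $2^n\cdot4^{-n}$.
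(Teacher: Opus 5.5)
Your proof is correct and has the same skeleton as the paper's: the closed nested sets $I_n$ give compactness, and $\lambda(I_n)=2^n\cdot 4^{-n}=2^{-n}\to 0$ gives measure zero. It differs from the paper on the other two properties.

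\textbf{Nowhere density.} The paper argues from the quarters removed at each stage, though it misdescribes them as the ``middle two'' when labels $0$ and $2$ are the ones discarded. You deduce it from closedness plus measure zero. That is shorter and does not depend on which pieces are removed.

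\textbf{Perfectness.} The paper only says each point is a limit of endpoints. Made precise, this needs the approximating endpoints to lie in $P_{\text{ess}}$ and to differ from $x$. Left endpoints are discarded at the next stage because label $0$ is removed, so only right endpoints (digit tail $333\ldots$) qualify. A point such as $x=1/2$ is itself such an endpoint, so it cannot be approximated by the endpoints of the intervals containing it. Your digit flip rests on the expansion $x=\sum_k d_k4^{-k}$ with $d_k\in\{1,3\}$, which is the paper's map $\phi$ from the cardinality theorem, together with its uniqueness. It treats all points uniformly and is the more rigorous version.

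\textbf{A slip in your alternative nowhere-density argument.} It is not true that $U$ must contain a whole stage-$n$ interval once $4^{-n}<\ell/4$. Take $U=(0.49,0.59)\ni 1/2$ and $n=3$: the nearest stage-$3$ interval $[31/64,1/2]$ sticks out of $U$, and the next one starts at $53/64$. The correct observation is that each component of $I_n$ has length $4^{-n}<\ell$, so the interval $U$ cannot lie in $I_n\supseteq P_{\text{ess}}$. Your primary argument already settles nowhere density, so nothing is lost.
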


\begin{proof}
By construction, each $I_n$ is a finite union of closed intervals, hence compact. Since $I_{n+1} \subset I_n$, the intersection $P_{\text{ess}}$ is compact. At each stage, we remove the middle two quarters of each interval, so no interval remains in the limit, making $P_{\text{ess}}$ nowhere dense. Each point in $P_{\text{ess}}$ is a limit of endpoints, so $P_{\text{ess}}$ is perfect. The total length at stage $n$ is $2^n \cdot 4^{-n} = 2^{-n} \to 0$, so Lebesgue measure is zero.
\end{proof}

\begin{theorem}[Hausdorff Dimension]
$\dim_H(P_{\text{ess}}) = \frac{\log 2}{\log 4} = \frac{1}{2}$.
\end{theorem}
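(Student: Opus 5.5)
The plan is to treat $P_{\text{ess}}$ as the attractor of an iterated function system. The construction keeps subintervals $1$ and $3$ of each interval, so $P_{\text{ess}}$ is the unique nonempty compact set invariant under the two similarities
\[
f_1(x)=\tfrac{x+1}{4},\qquad f_3(x)=\tfrac{x+3}{4}.
\]
That is, $P_{\text{ess}}=f_1(P_{\text{ess}})\cup f_3(P_{\text{ess}})$. This holds because the stage-$n$ set $I_n$ is exactly $\bigcup_{|w|=n} f_w([0,1])$ over words $w\in\{1,3\}^n$. Equivalently, $P_{\text{ess}}$ is the set of $x\in[0,1]$ admitting a base-4 expansion with all digits in $\{1,3\}$.

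The images $f_1([0,1])=[1/4,1/2]$ and $f_3([0,1])=[3/4,1]$ are disjoint subintervals of $[0,1]$. So the open set condition holds with $V=(0,1)$. Moran's theorem (Hutchinson's theorem) then gives $\dim_H P_{\text{ess}}=s$, where $s$ solves $2\cdot 4^{-s}=1$, i.e.\ $s=\log 2/\log 4=1/2$.

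If a self-contained argument is preferred over citing Moran's theorem, I would prove the two inequalities directly.

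\emph{Upper bound.} For each $n$, $I_n$ gives a cover of $P_{\text{ess}}$ by $2^n$ intervals of length $4^{-n}$. Hence
\[
\mathcal H^{1/2}_{4^{-n}}(P_{\text{ess}})\le 2^n (4^{-n})^{1/2}=1 .
\]
Letting $n\to\infty$ gives $\mathcal H^{1/2}(P_{\text{ess}})\le 1$, so $\dim_H\le 1/2$.

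\emph{Lower bound.} This is the main step, and I would use the mass distribution principle. Let $\mu$ be the natural probability measure that gives each of the $2^n$ stage-$n$ intervals mass $2^{-n}$; it is the image of the uniform Bernoulli measure on $\{1,3\}^{\mathbb N}$ under the digit map. Take any set $U$ with $4^{-(n+1)}\le |U|<4^{-n}$. I claim $U$ meets at most two stage-$n$ intervals. The reason is that distinct stage-$n$ intervals are separated by gaps of length at least $4^{-n}$, coming from the removed pieces $0$ and $2$ at that level or an earlier one, and $|U|<4^{-n}$ cannot span such a gap plus an interval. Therefore
\[
\mu(U)\le 2\cdot 2^{-n}=2\cdot (4^{-n})^{1/2}\le 4\,|U|^{1/2}.
\]
The mass distribution principle then gives $\mathcal H^{1/2}(P_{\text{ess}})\ge \mu(P_{\text{ess}})/4>0$, hence $\dim_H\ge 1/2$.

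The only delicate point is the gap claim. Within a parent interval of length $4^{-(n-1)}$, the children sit at positions $[1/4,1/2]$ and $[3/4,1]$ relative to the parent, so the gap between them has length $4^{-n}$. Children of different parents are separated by at least the gap between their ancestors, which is longer. Combining the two bounds gives $\dim_H(P_{\text{ess}})=1/2$.
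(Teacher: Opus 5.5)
Your proposal is correct and takes the same approach as the paper: both identify $P_{\text{ess}}$ as the attractor of $f_1(x)=x/4+1/4$ and $f_3(x)=x/4+3/4$, check the open set condition, and solve $2\cdot(1/4)^s=1$ via Hutchinson's theorem. Your optional self-contained argument (covering by $I_n$ for the upper bound, mass distribution for the lower bound) is also sound and goes beyond what the paper gives. It can even be sharpened: since all gaps between stage-$n$ intervals have length at least $4^{-n}>|U|$, such a $U$ meets at most one stage-$n$ interval, not just two.
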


\begin{proof}
The construction satisfies the open set condition: the two similarity transformations $f_1(x)=x/4 + 1/4$ and $f_3(x)=x/4 + 3/4$ have contraction ratios $r=1/4$, and there are $N=2$ such transformations. By Hutchinson's theorem, the Hausdorff dimension $s$ satisfies $N r^s = 2 \cdot (1/4)^s = 1$, so $s = \log 2 / \log 4 = 1/2$.
\end{proof}

\begin{theorem}[Cardinality]
$|P_{\text{ess}}| = \mathfrak{c}$, the cardinality of the continuum.
\end{theorem}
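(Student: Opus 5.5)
The plan is to prove $|P_{\text{ess}}| = \mathfrak{c}$ by establishing the two inequalities $|P_{\text{ess}}| \le \mathfrak{c}$ and $|P_{\text{ess}}| \ge \mathfrak{c}$ and then invoking Cantor--Schröder--Bernstein. The upper bound is immediate, since $P_{\text{ess}} \subset [0,1]$ and $|[0,1]| = \mathfrak{c}$. For the lower bound I will build an explicit injection from the Cantor space $\{0,1\}^{\mathbb{N}}$, which has cardinality $2^{\aleph_0} = \mathfrak{c}$, into $P_{\text{ess}}$.

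The injection comes from base-4 expansions. To a binary sequence $(b_n)_{n\ge1}$ I associate the point
\[
\Phi\bigl((b_n)\bigr) = \sum_{n=1}^\infty \frac{2b_n + 1}{4^n},
\]
whose $n$-th quaternary digit is $1$ if $b_n = 0$ and $3$ if $b_n = 1$. Equivalently, $\Phi((b_n))$ is the unique point of the nested intersection $\bigcap_n f_{d_1}\circ\cdots\circ f_{d_n}([0,1])$, where $d_n = 2b_n+1$ and $f_1, f_3$ are the similarities used in the proof of the Hausdorff dimension theorem. To see that $\Phi$ maps into $P_{\text{ess}}$, I show by induction on $n$ that the stage-$n$ intervals of $I_n$ are exactly the intervals $\bigl[\sum_{k\le n} d_k 4^{-k},\ \sum_{k\le n} d_k 4^{-k} + 4^{-n}\bigr]$ with every $d_k \in \{1,3\}$. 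This is exactly what the construction does: subinterval $j$ of the $n$-th subdivision corresponds to appending the digit $j$. The tail $\sum_{k>n} d_k 4^{-k}$ lies in $[0,4^{-n}]$, so $\Phi((b_n)) \in I_n$ for every $n$, and hence $\Phi((b_n)) \in P_{\text{ess}}$.

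The only delicate point is injectivity, because base-4 expansions are not unique in general: a number can have two expansions, one ending in repeated $0$s and the other in repeated $3$s. Here I will argue directly instead of appealing to uniqueness of expansions. Suppose two sequences first differ at index $n$. Then the corresponding points lie in the stage-$n$ intervals labeled $1$ and $3$ inside the same parent interval. These two subintervals are separated by the removed subinterval labeled $2$, which has positive length $4^{-n}$, so they are disjoint closed intervals and the two points are distinct. This shows $\Phi$ is injective, giving $|P_{\text{ess}}| \ge \mathfrak{c}$, and the result follows.

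As an alternative route, one could instead use the preceding theorem that $P_{\text{ess}}$ is a nonempty perfect compact subset of $\mathbb{R}$, together with the classical fact that every nonempty perfect Polish space has cardinality $\mathfrak{c}$. I prefer the explicit coding above because it is self-contained.
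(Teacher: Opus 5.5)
Your proof is correct and uses the same coding map $\sigma \mapsto \sum_{n\ge1}(2\sigma_n+1)4^{-n}$ from Cantor space that the paper uses. The paper only asserts that this map is a bijection onto $P_{\text{ess}}$. You instead prove that it lands in $P_{\text{ess}}$ and is injective, using the gap left by the removed subinterval to handle non-unique base-4 expansions, and then finish with $P_{\text{ess}}\subset[0,1]$ and Cantor--Schr\"oder--Bernstein, so you never need surjectivity.
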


\begin{proof}
Define $\phi: \{0,1\}^\mathbb{N} \to P_{\text{ess}}$ by $\phi(\sigma) = \sum_{n=1}^\infty (2\sigma_n+1)4^{-n}$. This is injective and onto, establishing a bijection with Cantor space.
\end{proof}

\begin{theorem}[Self-similarity]
$P_{\text{ess}}$ is self-similar: $P_{\text{ess}} = f_1(P_{\text{ess}}) \cup f_3(P_{\text{ess}})$ where $f_i(x)=x/4 + c_i$ with $c_1=1/4$, $c_3=3/4$.
\end{theorem}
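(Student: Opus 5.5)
The plan is to prove the stronger stage-wise identity
\[
I_n = f_1(I_{n-1}) \cup f_3(I_{n-1}) \qquad (n \ge 1),
\]
and then pass to the intersection over $n$. For the base case $n=1$, note that $f_1([0,1]) = [1/4,1/2]$ and $f_3([0,1]) = [3/4,1]$. These are exactly the subintervals labeled $1$ and $3$ kept at stage $1$.

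For the inductive step, I will use an explicit description of the stage-$n$ intervals. By induction on the construction, $I_n$ is the union over words $d_1\cdots d_n \in \{1,3\}^n$ of the intervals
\[
J_{d_1\cdots d_n} = \Bigl[\textstyle\sum_{k=1}^n d_k 4^{-k},\ \sum_{k=1}^n d_k 4^{-k} + 4^{-n}\Bigr].
\]
Subdividing $J_{d_1\cdots d_n}$ into quarters and keeping labels $1$ and $3$ gives exactly $J_{d_1\cdots d_n 1}$ and $J_{d_1\cdots d_n 3}$. A direct computation then shows
\[
f_d\bigl(J_{d_1\cdots d_{n-1}}\bigr) = J_{d\,d_1\cdots d_{n-1}} \qquad (d \in \{1,3\}).
\]
Taking the union over all words yields the displayed identity. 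This bookkeeping step, showing that the kept subintervals of stage $n$ lying inside $[1/4,1/2]$ are precisely the $f_1$-images of the stage-$(n-1)$ intervals, is the only place requiring care. Everything else is formal.

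Passing to the limit needs two facts.
\begin{enumerate}
\item Each $f_d$ is injective, so $f_d\bigl(\bigcap_n I_{n-1}\bigr) = \bigcap_n f_d(I_{n-1})$. Here $\bigcap_{n\ge1} I_{n-1} = P_{\text{ess}}$, since $I_0 \supset I_1$ and the sequence is decreasing.
\item For decreasing sequences $A_n = f_1(I_{n-1}) \subset [1/4,1/2]$ and $B_n = f_3(I_{n-1}) \subset [3/4,1]$, which lie in disjoint sets, we have $\bigcap_n (A_n \cup B_n) = \bigl(\bigcap_n A_n\bigr) \cup \bigl(\bigcap_n B_n\bigr)$. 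To check the nontrivial inclusion, take a point $x$ in the left side. Then $x$ lies either in $[1/4,1/2]$ for all $n$ or in $[3/4,1]$ for all $n$. In the first case it must lie in $A_n$ for every $n$; in the second, in $B_n$ for every $n$.
\end{enumerate}
Combining these two facts with the stage-wise identity gives $P_{\text{ess}} = f_1(P_{\text{ess}}) \cup f_3(P_{\text{ess}})$.

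As a cross-check, or as an alternative short route, one can use the digit description underlying the map $\phi$ in the Cardinality theorem: $P_{\text{ess}} = \{\sum_{k\ge1} d_k 4^{-k} : d_k \in \{1,3\}\}$. Under this description,
\[
f_d\Bigl(\textstyle\sum_k d_k 4^{-k}\Bigr) = d\,4^{-1} + \sum_k d_k 4^{-(k+1)},
\]
so $f_d$ prepends the digit $d$. Every admissible digit sequence arises uniquely by prepending its first digit to its shift, which gives the equality immediately. This route, however, relies on the digit characterization, which itself reduces to the interval description above. I would therefore present the interval induction as the main argument.
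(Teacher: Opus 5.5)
Your proof is correct and follows the same route as the paper, which says only that the identity is ``immediate from the construction.'' Your stage-wise identity $I_n = f_1(I_{n-1}) \cup f_3(I_{n-1})$, together with the injectivity and disjointness arguments for passing to the intersection, is exactly the bookkeeping that the paper's one-line justification leaves implicit.
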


\begin{proof}
Immediate from the construction.
\end{proof}

\subsection{Information Measure of $P_{\text{ess}}$}

\begin{definition}
The information measure of $P_{\text{ess}}$ is defined as
\[
\iota(P_{\text{ess}}) = -\zeta(1/2) \approx 1.460354508809586812889499152515\ldots
\]
where $\zeta$ is the Riemann zeta function.
\end{definition}

\begin{justification}
This definition is motivated by:

\textbf{Dimensional Correspondence}: The Hausdorff dimension $\frac{1}{2}$ corresponds to the critical line $\text{Re}(s)=\frac{1}{2}$ where Riemann zeta zeros lie.

\textbf{Arithmetic Significance}: The value $\zeta(1/2)$ appears in various asymptotic formulas involving primes and has intrinsic number-theoretic importance.

\textbf{Duality Anticipation}: The negative sign anticipates the conjectured duality with the fractal zero set $Z_F$ (Section 6).

\textbf{Axiomatic Compatibility}: This definition satisfies the axioms developed in Section 7.
\end{justification}

\begin{proposition}[Properties of $\iota(P_{\text{ess}})$]
\begin{enumerate}
\item $\iota(P_{\text{ess}}) = -\zeta(1/2)$ is the negative of the zeta value at the critical point, where $\zeta(1/2) < 0$.
\item $|\iota(P_{\text{ess}})| \approx 1.46035 > 0$.
\item $\iota(P_{\text{ess}})$ is transcendental (assuming standard conjectures about zeta values).
\end{enumerate}
\end{proposition}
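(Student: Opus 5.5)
The plan is to treat the three items separately, since each reduces to a known fact about the single real number $\zeta(1/2)$. The main tool is the alternating (Dirichlet eta) representation, valid for $\operatorname{Re}(s)>0$:
\[
\zeta(s) = \frac{1}{1-2^{1-s}}\,\eta(s), \qquad \eta(s)=\sum_{n=1}^\infty \frac{(-1)^{n-1}}{n^s}.
\]

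\textbf{Item 1.} Once we know $\zeta(1/2)<0$, item 1 is just the definition $\iota(P_{\text{ess}})=-\zeta(1/2)$. To get the sign, evaluate at $s=1/2$:
\begin{itemize}
\item the prefactor is $1/(1-\sqrt{2})<0$;
\item $\eta(1/2)=1-1/\sqrt2+1/\sqrt3-\cdots$ is an alternating series with terms decreasing to $0$;
\item by the Leibniz criterion, $\eta(1/2)$ lies strictly between consecutive partial sums, in particular between $1-1/\sqrt2\approx 0.29$ and $1$, so $\eta(1/2)>0$.
\end{itemize}
Hence $\zeta(1/2)<0$ and $\iota(P_{\text{ess}})>0$.

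\textbf{Item 2.} This needs a quantitative enclosure of $\zeta(1/2)$. The same Leibniz bracketing can be refined in two ways:
\begin{itemize}
\item take more partial sums, and accelerate convergence by averaging consecutive partial sums (or by an Euler transform);
\item alternatively, use Euler--Maclaurin summation, $\zeta(1/2)=\sum_{n<N} n^{-1/2} - 2\sqrt N + \tfrac12 N^{-1/2}+\cdots$, with an explicit remainder bound.
\end{itemize}
Either route pins down $\zeta(1/2)=-1.46035\ldots$ to the stated digits, which gives $|\iota(P_{\text{ess}})|\approx 1.46035>0$. The only care required is to keep the remainder term rigorous. Strict positivity itself already follows from item 1.

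\textbf{Item 3.} This is the main obstacle: the transcendence of $\zeta(1/2)$, and even its irrationality, is unknown. It cannot be proved outright; it can only be deduced from a conjecture. My approach is to state the conditional hypothesis explicitly and derive the claim from it:
\begin{itemize}
\item first try a period-type conjecture: Kontsevich--Zagier together with the expectation that $\zeta(1/2)$ is not a period of the appropriate algebraic type;
\item otherwise, simply assume directly that $\zeta(1/2)$ is transcendental.
\end{itemize}
From either hypothesis, $-\zeta(1/2)$ is transcendental, since negation preserves algebraicity. I would write item 3 as a conditional consequence and not present it as a theorem.
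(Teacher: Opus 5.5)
Your proposal is correct, and for items 2 and 3 it matches what the paper does. The paper attaches no proof to this proposition; its only support is Appendix A.3, which quotes an Euler--Maclaurin evaluation $\zeta(1/2)\approx -1.4603545\ldots$. That is your item 2, though you rightly insist on an explicit remainder bound. Item 3 the paper asserts conditionally on unspecified ``standard conjectures,'' as you do.

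The genuine difference is item 1. The paper obtains the sign of $\zeta(1/2)$ only from that numerical value. The functional-equation display in Appendix A.3 contributes nothing: $s=1/2$ is the self-dual point, so the equation collapses to $\zeta(1/2)=\zeta(1/2)$. As printed, with $\sqrt{2\pi}$ in place of $2^{1/2}\pi^{-1/2}=\sqrt{2/\pi}$, it would even force $\zeta(1/2)=0$. Your $\eta$-function argument is instead a short, computation-free proof, and it also gives an explicit enclosure. From $\zeta(1/2)=-(1+\sqrt2)\,\eta(1/2)$ and $1-1/\sqrt2<\eta(1/2)<1$ you get $-(1+\sqrt2)<\zeta(1/2)<-1/\sqrt2$. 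Hence $\zeta(1/2)\neq 0$, so the strict positivity in item 2 needs no numerics at all; only the digits $1.46035$ require the refined computation.

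On item 3, the Kontsevich--Zagier detour is superfluous. Every algebraic number is a period, so ``$\zeta(1/2)$ is not a period'' would imply transcendence by itself. Since no established conjecture is known to give either statement, your fallback of assuming transcendence of $\zeta(1/2)$ outright is the honest formulation. It is also all the paper's phrase amounts to.
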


\section{The Generalized Cantor Set $C_{1/3}$}

For comparison, we consider a classical fractal with the same cardinality but different geometry and no known arithmetic significance.

\begin{definition}[Generalized Cantor Set $C_{1/3}$]
Let $r=1/8$ and define two similarity transformations: $f_0(x)=rx$, $f_1(x)=rx+(1-r)$. Then $C_{1/3}$ is the unique compact set satisfying $C_{1/3}=f_0(C_{1/3})\cup f_1(C_{1/3})$. Equivalently, begin with $[0,1]$, repeatedly remove the middle $3/4$ of each interval (leaving the first and last eighth), and take the intersection of all stages.
\end{definition}

\begin{theorem}[Mathematical Properties]
\begin{enumerate}
\item $\dim_H(C_{1/3})=\frac{\log 2}{\log 8}=\frac{1}{3}$.
\item $|C_{1/3}|=\mathfrak{c}$.
\item $C_{1/3}$ is self-similar, compact, perfect, and nowhere dense.
\item Lebesgue measure is zero.
\end{enumerate}
\end{theorem}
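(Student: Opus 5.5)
The plan is to follow the template already used for $P_{\text{ess}}$, since $C_{1/3}$ is the attractor of the iterated function system $\{f_0,f_1\}$ with $f_0(x)=x/8$ and $f_1(x)=x/8+7/8$. For the dimension (item 1), I would check the open set condition with $U=(0,1)$. The images are $f_0(U)=(0,1/8)$ and $f_1(U)=(7/8,1)$. They are disjoint and contained in $U$. Hutchinson's theorem then gives $\dim_H(C_{1/3})=s$, where $2\cdot 8^{-s}=1$. So $s=\log 2/\log 8=1/3$, exactly as in the Hausdorff Dimension theorem for $P_{\text{ess}}$. Self-similarity (part of item 3) holds by definition, since $C_{1/3}$ is defined as the unique compact set fixed by $f_0,f_1$. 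Existence and uniqueness come from the contraction mapping principle on the nonempty compact sets with the Hausdorff metric.

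\textbf{Cardinality (item 2).} I would use the address map $\psi:\{0,1\}^{\mathbb N}\to C_{1/3}$, given by $\psi(\sigma)=\sum_{n\ge1}7\sigma_n 8^{-n}$. This is the point $\lim_n f_{\sigma_1}\circ\cdots\circ f_{\sigma_n}(0)$.

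\emph{Surjectivity.} Every point of $C_{1/3}$ lies in a nested sequence of stage-$n$ intervals $f_{\sigma_1}\circ\cdots\circ f_{\sigma_n}([0,1])$, whose lengths $8^{-n}$ shrink to zero. So the point equals $\psi(\sigma)$ for that $\sigma$.

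\emph{Injectivity.} This is the one step needing care. Suppose two sequences first differ at index $k$. Then their images lie in the two disjoint level-$k$ subintervals of the same level-$(k-1)$ interval. These are separated by a gap of length $(3/4)\cdot 8^{-(k-1)}>0$, so the images differ.

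Hence $\psi$ is a bijection with Cantor space, and $|C_{1/3}|=2^{\aleph_0}=\mathfrak c$. I expect this injectivity check to be the only real obstacle. Unlike base-$10$ expansions, there is no ambiguity here, because the digits $0$ and $7$ in base $8$ leave gaps.

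\textbf{Remaining topological and measure properties (items 3 and 4).} I would write $C_{1/3}=\bigcap_n J_n$, where $J_n$ is the union of $2^n$ disjoint closed intervals of length $8^{-n}$.

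\emph{Compactness.} Each $J_n$ is compact and the $J_n$ are nested, so the intersection is compact.

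\emph{Lebesgue measure zero.} The total length of $J_n$ is $2^n8^{-n}=4^{-n}\to0$.

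\emph{Nowhere density.} $C_{1/3}$ is closed and has measure zero, so it contains no interval. Hence its interior is empty, so it is nowhere dense.

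\emph{Perfectness.} Take $x=\psi(\sigma)$ and change $\sigma$ only at index $n$. The resulting point is a distinct point of $C_{1/3}$ within distance $8^{-(n-1)}$ of $x$. Letting $n\to\infty$ shows $x$ is a limit point, so there are no isolated points.
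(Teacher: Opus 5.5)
Your proposal is correct and takes the same route as the paper: the dimension comes from Hutchinson's theorem with the open set condition witnessed by $(0,1)$, giving $2\cdot 8^{-s}=1$. The remaining items are the standard self-similar-set facts that the paper only cites as ``standard results,'' and you fill them in the same way the paper handles $P_{\text{ess}}$: a bijection with Cantor space via addresses (injectivity from the $(3/4)\cdot 8^{-(k-1)}$ gap), a nested compact intersection, total length $4^{-n}$, and changing a single address digit to show perfectness.
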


\begin{proof}
Standard results on self-similar sets. The similarity dimension equation is $2\cdot(1/8)^s=1$, giving $s=\log 2/\log 8=1/3$. The open set condition holds with the open interval $(0,1)$.
\end{proof}

\begin{definition}[Information Measure of $C_{1/3}$]
$\iota(C_{1/3})=0$.
\end{definition}

\begin{justification}
The set $C_{1/3}$ has no known deep connections to L-functions or arithmetic. In the axiomatic system of Section 7, sets without such connections have zero information measure by definition. This serves as a baseline for comparison.
\end{justification}

\section{Comparison of Informational Cardinalities}

\begin{theorem}[Informational Cardinality Values]
\begin{align*}
I(P_{\text{ess}}) &= \left(1,\ \frac{1}{2},\ -\zeta(1/2)\right) \approx (1,\ 0.5,\ 1.46035) \\
I(C_{1/3}) &= \left(1,\ \frac{1}{3},\ 0\right) \approx (1,\ 0.333\ldots,\ 0)
\end{align*}
\end{theorem}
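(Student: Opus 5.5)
The plan is to assemble the triple component by component, taking each entry from the definitions and theorems already established for $P_{\text{ess}}$ and $C_{1/3}$. No new mathematics is needed beyond checking that each cited result applies and that the numerical value of $-\zeta(1/2)$ is correct.

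\textbf{First component.} By the Cardinality theorem for $P_{\text{ess}}$ and item 2 of the Mathematical Properties theorem for $C_{1/3}$, both sets have cardinality $\mathfrak{c}$. Since $\mathfrak{c} = 2^{\aleph_0} > \aleph_0$ by Cantor's theorem, the definition of the cardinality indicator gives $\alpha(P_{\text{ess}}) = \alpha(C_{1/3}) = 1$. This uses no form of the Continuum Hypothesis. For the cardinality of $P_{\text{ess}}$, I would rely on the quaternary expansion $\sum (2\sigma_n+1)4^{-n}$, whose injectivity follows because the digits lie in $\{1,3\}$, so no point has two expansions.

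\textbf{Second component.} Here I cite the Hausdorff Dimension theorem for $P_{\text{ess}}$, which gives $\tfrac12$, and item 1 for $C_{1/3}$, which gives $\tfrac13$. Both rest on Hutchinson's theorem under the open set condition. For $P_{\text{ess}}$, I would verify the open set condition with the open interval $(0,1)$, whose images $(1/4,1/2)$ and $(3/4,1)$ are disjoint and contained in it. For $C_{1/3}$, the images $(0,1/8)$ and $(7/8,1)$ play the same role.

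\textbf{Third component.} This is definitional: $\iota(P_{\text{ess}}) = -\zeta(1/2)$ by the definition in Section 3.3, and $\iota(C_{1/3}) = 0$ by the definition in Section 4.

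\textbf{Main obstacle: the numerical value.} The one genuinely substantive point is justifying that $-\zeta(1/2) \approx 1.46035$ is positive. I would use the alternating (Dirichlet eta) representation
$$\zeta(s) = (1-2^{1-s})^{-1}\sum_{n\ge1} (-1)^{n-1} n^{-s}, \qquad s=\tfrac12.$$
The prefactor $1-\sqrt2$ is negative, while the alternating series $\eta(1/2)$ is positive, lying between consecutive partial sums, e.g. $1 - 1/\sqrt2 > 0$. Hence $\zeta(1/2) = \eta(1/2)/(1-\sqrt2) < 0$. The stated decimal value would then be obtained by Euler--Maclaurin summation or simply quoted from the standard tables.
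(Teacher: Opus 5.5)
Your proposal is correct and follows the same route as the paper. The paper gives no separate argument for this theorem; it reads the triple off from the earlier cardinality and Hausdorff-dimension results (Hutchinson's theorem under the open set condition) together with the definitions $\iota(P_{\text{ess}})=-\zeta(1/2)$ and $\iota(C_{1/3})=0$. Your explicit open set $(0,1)$ and your Dirichlet eta argument for $\zeta(1/2)<0$ make rigorous what the paper leaves implicit or handles by quoting Euler--Maclaurin numerics in Appendix A; the functional-equation step there gives no information at the self-dual point $s=1/2$.
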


\begin{theorem}[Comparison Result]
Under lexicographic ordering, $I(P_{\text{ess}}) > I(C_{1/3})$.
\end{theorem}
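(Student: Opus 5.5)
The plan is to apply the Comparison Rule directly to the two triples recorded in the preceding theorem on informational cardinality values. Under the lexicographic order, $I(P_{\text{ess}}) > I(C_{1/3})$ holds as soon as we find an index $i$ where the components first differ and the component of $P_{\text{ess}}$ is larger there. So the work reduces to checking the first two components in order.

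\textbf{Step 1 (first component).} By the cardinality theorem for $P_{\text{ess}}$ we have $|P_{\text{ess}}| = \mathfrak{c} > \aleph_0$. The bijection $\phi$ with Cantor space gives this. By the mathematical properties theorem for $C_{1/3}$ we have $|C_{1/3}| = \mathfrak{c}$ as well. Hence $\alpha(P_{\text{ess}}) = \alpha(C_{1/3}) = 1$, and the comparison is not decided at $i=1$.

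\textbf{Step 2 (second component).} By the Hausdorff dimension theorem, $\delta(P_{\text{ess}}) = \log 2/\log 4 = 1/2$. By item~1 of the properties of $C_{1/3}$, $\delta(C_{1/3}) = \log 2/\log 8 = 1/3$. Since $1/2 > 1/3$, the index $i=2$ witnesses the strict inequality in the Comparison Rule: the first components agree and the second component of $P_{\text{ess}}$ is strictly larger. This gives $I(P_{\text{ess}}) > I(C_{1/3})$.

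\textbf{Remark on the third component.} The third component is never consulted, so the argument does not depend on the sign of $\zeta(1/2)$ or on the axiomatics of Section~7. For completeness one may note that $\iota(P_{\text{ess}}) = -\zeta(1/2) > 0 = \iota(C_{1/3})$, so the inequality would also hold at $i=3$.

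\textbf{Main obstacle.} The only step carrying real mathematical content is the dimension computation, which rests on the open set condition for both iterated function systems. For $P_{\text{ess}}$, the open interval $(0,1)$ is mapped by $f_1, f_3$ to $(1/4,1/2)$ and $(3/4,1)$. For $C_{1/3}$, it is mapped by $f_0, f_1$ to $(0,1/8)$ and $(7/8,1)$. In both cases the images are disjoint and contained in $(0,1)$. I would verify these containments explicitly before invoking Hutchinson's theorem as cited earlier in the paper. With that in place, the comparison is a one-line application of the ordering.
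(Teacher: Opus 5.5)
Your proof is correct and is essentially the paper's own argument: the $\alpha$-components agree, $\delta(P_{\text{ess}}) = 1/2 > 1/3 = \delta(C_{1/3})$ decides the lexicographic comparison at $i=2$, and the third component is never needed. Your explicit open-set-condition check is a harmless addition; the paper leaves it to its earlier dimension theorems.

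One small wording point: in your closing remark, $i=3$ cannot itself witness the inequality under the Comparison Rule, because the second components differ. What is true is only the componentwise fact $\iota(P_{\text{ess}}) > \iota(C_{1/3})$.
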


\begin{proof}
Compare components:
\begin{enumerate}
\item $\alpha(P_{\text{ess}}) = 1 = \alpha(C_{1/3})$ (both have cardinality $\mathfrak{c}$)
\item $\delta(P_{\text{ess}}) = \frac{1}{2} > \frac{1}{3} = \delta(C_{1/3})$
\end{enumerate}
Since the second component differs and is larger for $P_{\text{ess}}$, we have $I(P_{\text{ess}}) > I(C_{1/3})$ regardless of the third component.
\end{proof}

\begin{interpretation}
This result demonstrates that informational cardinality successfully distinguishes between sets that traditional cardinality identifies as equivalent. The higher Hausdorff dimension of $P_{\text{ess}}$ reflects its richer geometric structure, while its non-zero information measure captures its connection to deep number theory. The comparison shows that mathematical ``richness'' or ``complexity'' has multiple dimensions beyond simple cardinality.
\end{interpretation}

\begin{corollary}[Extended Comparison]
Considering extended informational cardinality with multiple fractal dimensions:
\begin{align*}
I_{\text{extended}}(P_{\text{ess}}) &= \left(1,\ \left(\frac{1}{2},\frac{1}{2},\frac{1}{2},\ldots\right),\ -\zeta(1/2)\right) \\
I_{\text{extended}}(C_{1/3}) &= \left(1,\ \left(\frac{1}{3},\frac{1}{3},\frac{1}{3},\ldots\right),\ 0\right)
\end{align*}
(all standard fractal dimensions equal $\frac{1}{2}$ for $P_{\text{ess}}$)

Under componentwise comparison, $P_{\text{ess}}$ still dominates in the dimension vector, confirming the robustness of our result.
\end{corollary}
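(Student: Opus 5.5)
The plan is to verify the three slots of each extended triple and then compare the dimension vectors componentwise. The first and third slots are already settled by earlier results and definitions. The cardinality theorems give $|P_{\text{ess}}| = |C_{1/3}| = \mathfrak{c}$, so $\alpha = 1$ for both. The information measures $-\zeta(1/2)$ and $0$ were fixed by definition. So the real content is the claim that every standard fractal dimension of $P_{\text{ess}}$ equals $\frac12$ and every one of $C_{1/3}$ equals $\frac13$. Concretely, I will prove
\[
\dim_H = \underline{\dim}_B = \overline{\dim}_B = \dim_P
\]
for each set. I will not invoke a vague ``$\ldots$''; I will restrict the vector to these standard dimensions.

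For this I will use the general inequalities
\[
\dim_H(F) \le \dim_P(F) \le \overline{\dim}_B(F)
\qquad\text{and}\qquad
\dim_H(F) \le \underline{\dim}_B(F) \le \overline{\dim}_B(F),
\]
valid for any bounded $F\subset\mathbb{R}$. It then suffices to show $\overline{\dim}_B(F) \le s$, where $s$ is the Hausdorff dimension already computed: $\frac12$ in the theorem on the Hausdorff dimension of $P_{\text{ess}}$, and $\frac13$ in the properties theorem for $C_{1/3}$. The upper bound is a direct covering count.

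\begin{itemize}
\item For $P_{\text{ess}}$: at scale $4^{-n}$ the set is covered by the $2^n$ intervals of $I_n$. Hence $N_{4^{-n}}(P_{\text{ess}}) \le 2^n$, and so $\log N/\!-\!\log(4^{-n}) \le \frac12$.
\item For intermediate scales $4^{-n-1}\le\varepsilon<4^{-n}$, the bound $N_\varepsilon \le 2^{n+1}$ gives the same limsup.
\item For $C_{1/3}$ the same argument works with $8^{-n}$ and $2^n$ intervals.
\end{itemize}

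Alternatively, I can simply cite the standard theorem that a self-similar set satisfying the open set condition has equal Hausdorff, box and packing dimensions, all equal to the similarity dimension. The OSC was already checked in both earlier proofs, with open set $(0,1)$.

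Finally, the componentwise comparison is immediate: $\frac12 > \frac13$ in every coordinate. Any reasonable aggregate (the sum, the max, or any monotone function) also preserves strict dominance. With $\alpha$ equal for both sets, the lexicographic rule then gives $I_{\text{extended}}(P_{\text{ess}}) > I_{\text{extended}}(C_{1/3})$, exactly as in the proof of the comparison theorem.

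The main obstacle is making ``all standard fractal dimensions'' precise. An unqualified list could include, for instance, Assouad dimension or Fourier dimension. Assouad dimension also equals $s$ for OSC self-similar sets in $\mathbb{R}$. Fourier dimension, however, is different: for the middle-type Cantor sets here it can be $0$, because these sets carry non-Rajchman natural measures. I would therefore state the corollary for the vector $(\dim_H,\dim_B,\dim_P,\dim_A)$ and note the exclusion explicitly.
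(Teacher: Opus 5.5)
Your proposal is correct. It also supplies an argument the paper never gives. The paper states this corollary with no proof beyond the parenthetical claim that all standard fractal dimensions of $P_{\text{ess}}$ equal $\frac12$, and it says nothing about $C_{1/3}$. It implicitly relies on the standard fact that Hausdorff, box and packing dimensions coincide for self-similar sets with the open set condition, which is the fact you keep as your fallback. Your main route instead reduces everything to the Hausdorff dimensions the paper already computed. It uses the sandwiches $\dim_H\le\underline{\dim}_B\le\overline{\dim}_B$ and $\dim_H\le\dim_P\le\overline{\dim}_B$ together with the covering bounds $N_{4^{-n}}(P_{\text{ess}})\le 2^n$ and $N_{8^{-n}}(C_{1/3})\le 2^n$. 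This needs only the construction intervals plus Hutchinson's theorem for the lower bound. Citing the general OSC theorem is quicker and covers more sets, but it is exactly the step the paper leaves unstated. Fixing an explicit finite list of dimensions, rather than the paper's open-ended ``$\ldots$'', is also what makes ``componentwise dominance'' a well-defined claim.

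Two small corrections.

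\textbf{Fourier dimension.} It is not merely possibly $0$; it is exactly $0$ for both sets, but your stated reason is not the right one. Fourier dimension is a supremum over all measures on the set, so non-Rajchman behaviour of the natural self-similar measure proves nothing by itself. For instance, the golden-ratio Bernoulli convolution is non-Rajchman, yet its support is an interval. The correct argument runs as follows.
\begin{enumerate}
\item Points of $P_{\text{ess}}$ have base-$4$ digits in $\{1,3\}$, so $4^n x \bmod 1$ never lies in $(\frac12,\frac34)$.
\item Take a smooth $1$-periodic $\phi$ supported in that arc with $\int_0^1\phi=1$.
\item Any probability measure $\mu$ on $P_{\text{ess}}$ then satisfies $0=\int\phi(4^n x)\,d\mu(x)=\sum_k\hat\phi(k)\hat\mu(-4^n k)$.
\item If $\mu$ were Rajchman, this sum would tend to $1$, a contradiction.
\end{enumerate}
The same argument works for $C_{1/3}$ with base $8$, digits $\{0,7\}$ and the arc $(\frac18,\frac78)$. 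So if Fourier dimension counted as ``standard'', the paper's parenthetical would be false and that coordinate would only give $0=0$. Your exclusion is therefore necessary, not merely cautious.

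\textbf{Aggregates.} An aggregate preserves strict dominance only if it is strictly monotone; a constant function is monotone but erases the difference.
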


\section{Geometric Duality and Information Conservation}

\subsection{The Fractal Zero Set $Z_F$}

\begin{definition}[Construction of $Z_F$]
Let $\{\gamma_n\}_{n=1}^\infty$ be the positive imaginary parts of the non-trivial zeros of $\zeta(s)$, arranged in increasing order. For each $n$, compute $t_n = \{\gamma_n/(2\pi)\} \in [0,1)$, the fractional part, and set $a_n = \lfloor 4t_n \rfloor \in \{0,1,2,3\}$. Define recursively:

$J_0 = [0,1]$

For $n \ge 1$, given $J_{n-1}$ as a union of closed intervals, divide each interval into four equal subintervals labeled 0,1,2,3. Retain only the two symmetric subintervals labeled $a_n$ and $a_n+2 \pmod{4}$.

Let $J_n$ be the union of retained subintervals at stage $n$.

Define $Z_F = \bigcap_{n=1}^\infty J_n$.
\end{definition}

\begin{remark}[Interpretation]
The sequence $\{a_n\}$ encodes the imaginary parts of zeta zeros modulo $2\pi$, scaled to $\{0,1,2,3\}$. The symmetry in retention (keeping positions $a$ and $a+2 \bmod 4$) ensures $Z_F$ is symmetric about $\frac{1}{2}$ and maintains a structure analogous to $P_{\text{ess}}$.
\end{remark}

\begin{theorem}[Basic Properties of $Z_F$]
\begin{enumerate}
\item Regardless of the sequence $\{a_n\}$, $\dim_H(Z_F) = \frac{\log 2}{\log 4} = \frac{1}{2}$.
\item $|Z_F| = \mathfrak{c}$.
\item $Z_F$ is compact, perfect, and nowhere dense.
\end{enumerate}
\end{theorem}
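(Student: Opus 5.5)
The plan is to treat $Z_F$ as a Moran-type (non-autonomous) Cantor set with the same combinatorial skeleton as $P_{\text{ess}}$. We cannot invoke Hutchinson's theorem directly, because the retained positions $a_n$ change with $n$. Instead we work with the explicit coding map. At stage $n$ every interval is split into quarters and exactly two are kept, namely those labeled $a_n$ and $a_n+2 \pmod 4$. Their labels always differ by exactly $2$.

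By induction, $J_n$ is a union of $2^n$ disjoint closed intervals of length $4^{-n}$. Define $\psi:\{0,1\}^{\mathbb{N}}\to Z_F$ by
\[
\psi(\sigma)=\sum_{n=1}^{\infty} b_n(\sigma_n)\,4^{-n}, \qquad b_n(0)=\min(a_n,a_n+2 \bmod 4),\quad b_n(1)=b_n(0)+2,
\]
so that each digit lies in $\{0,2\}$ or $\{1,3\}$ according to the parity of $a_n$. Nestedness and compactness of the $J_n$ show that $\psi$ is onto $Z_F$, exactly as in the proof of the Cardinality theorem for $P_{\text{ess}}$. Injectivity follows from gap separation. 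At stage $n$, two retained quarters of the same parent interval are separated by a full quarter of length $4^{-n}$, so distinct codes first differing at position $n$ give points at distance at least $4^{-n}-\sum_{k>n}3\cdot 4^{-k}\ge 0$. A more careful check shows the distance is strictly positive, since both retained quarters are closed and the gap between them is an open quarter. This settles item 2, $|Z_F|=\mathfrak{c}$.

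Item 3 is handled as in the Basic Properties theorem for $P_{\text{ess}}$. $Z_F$ is compact as a nested intersection of compact sets. It has measure $\le 2^n4^{-n}\to 0$, hence contains no interval, hence is nowhere dense. It is perfect because every point $\psi(\sigma)$ is the limit of $\psi(\sigma^{(k)})$, where $\sigma^{(k)}$ flips the $k$-th digit of $\sigma$, and these points are distinct from $\psi(\sigma)$ by injectivity.

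For item 1 I would prove matching bounds, and I expect the lower bound to be the main step.

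For the upper bound, the cover of $Z_F$ by the $2^n$ intervals of $J_n$ gives $\mathcal{H}^{1/2}_{4^{-n}}(Z_F)\le 2^n(4^{-n})^{1/2}=1$, so $\dim_H Z_F\le 1/2$.

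For the lower bound I would use the mass distribution principle. Let $\mu$ be the push-forward under $\psi$ of the uniform Bernoulli measure, so each stage-$n$ interval carries mass $2^{-n}$. Take any set $U$ with $4^{-(n+1)}\le |U|<4^{-n}$. I claim $U$ meets at most two stage-$(n+1)$ intervals, or boundedly many stage-$n$ intervals. The reason is that distinct stage-$n$ basic intervals are separated by gaps of length at least $4^{-n}$, since siblings are separated by a whole quarter and non-siblings by an ancestor's gap. Hence $\mu(U)\le C\,2^{-n}\le C'|U|^{1/2}$. The mass distribution principle then yields $\dim_H Z_F\ge 1/2$.

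The only delicate point is verifying this uniform gap bound independent of $\{a_n\}$. It holds because the two retained labels always differ by $2$. This is precisely why the phrase ``regardless of the sequence'' is legitimate.
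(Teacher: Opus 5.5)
Your proof is correct. For items 2 and 3 it follows the paper: a digit-coding bijection as in the Cardinality theorem for $P_{\text{ess}}$, and compactness and nowhere density as in the Basic Properties theorem. Your digit-flipping argument for perfectness is cleaner than the paper's ``limit of endpoints''.

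For item 1 you take a genuinely different route. The paper simply cites Hutchinson's theorem with $2\cdot(1/4)^s=1$. Read literally, that theorem needs one fixed pair of similarities, whereas here the retained pair switches between $\{0,2\}$ and $\{1,3\}$ with the parity of $a_n$. You instead treat $Z_F$ as a homogeneous Moran set. You bound $\mathcal{H}^{1/2}(Z_F)\le 1$ by the natural covers, and get the lower bound from the mass distribution principle together with the uniform gap $4^{-n}$ between distinct stage-$n$ intervals. This is self-contained and makes ``regardless of $\{a_n\}$'' visibly true, at the cost of length.

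Two tightenings are worth making. First, a set with $|U|<4^{-n}$ is shorter than that gap, so it meets at most one stage-$n$ interval; hence $\mu(U)\le 2^{-n}\le 2|U|^{1/2}$ outright. Second, for injectivity, the $n$-th digits differ by exactly $2$ and later digits by at most $2$. This gives $|\psi(\sigma)-\psi(\tau)|\ge 2\cdot 4^{-n}-\tfrac23 4^{-n}>0$, replacing your borderline ``$\ge 0$'' estimate.

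Finally, your coding actually yields a one-line proof of everything. Because $b_n(\sigma_n)=b_n(0)+2\sigma_n$, you have $\psi(\sigma)=\phi(\sigma)+c$. Here $\phi(\sigma)=\sum_n(2\sigma_n+1)4^{-n}$ is the paper's map onto $P_{\text{ess}}$, and $c=\sum_n(b_n(0)-1)4^{-n}$ is a constant. So $Z_F=P_{\text{ess}}+c$ is a translate of $P_{\text{ess}}$, and all three items transfer by isometry from the theorems on $P_{\text{ess}}$. This observation is also exactly what would legitimize the paper's appeal to Hutchinson's theorem: $Z_F$ is the attractor of the translated maps $x\mapsto x/4+c_i+\tfrac34 c$ with $c_i\in\{1/4,3/4\}$.
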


\begin{proof}
The Hausdorff dimension follows from Hutchinson's theorem since at each stage we retain 2 of 4 subintervals with scaling factor $1/4$, giving $2\cdot(1/4)^s=1$, so $s=1/2$. The cardinality argument is similar to Theorem 3.5. Compactness and nowhere density follow as in Theorem 3.3.
\end{proof}

\begin{remark}[Sensitivity to Ordering]
The construction of $Z_F$ depends on the ordering of zeros by increasing imaginary part. Different orderings (by absolute value, by $|\zeta'(\frac{1}{2}+i\gamma)|$, random permutations) yield potentially different sets, all with Hausdorff dimension $1/2$ but potentially different finer geometric properties. Section 8 analyzes this sensitivity.
\end{remark}

\subsection{Information Conservation Conjecture}

\begin{conjecture}[Information Conservation]
The fractal zero set $Z_F$ has information measure
\[
\iota(Z_F) = \zeta(1/2) = -\iota(P_{\text{ess}}),
\]
and therefore
\[
\iota(P_{\text{ess}}) + \iota(Z_F) = 0.
\]
\end{conjecture}

\begin{justification}
This conjecture is motivated by:

\textbf{Riemann-von Mangoldt Explicit Formula}: Relates primes to zeros with opposite signs in their contributions.

\textbf{Dimensional Symmetry}: Both sets have Hausdorff dimension $\frac{1}{2}$.

\textbf{Functional Equation}: The functional equation $\zeta(s) = \chi(s)\zeta(1-s)$ relates values at $s$ and $1-s$, suggesting a duality.

\textbf{Spectral Interpretation}: In Connes' noncommutative geometry, primes and zeros play dual roles in the spectral theory of the adele class space.

\textbf{Conservation Principle}: Information is neither created nor destroyed, only transformed between dual representations.
\end{justification}

\begin{remark}[Sign Convention]
Note that $\zeta(1/2) \approx -1.46035$ (negative), so $-\zeta(1/2) \approx 1.46035$ (positive). Thus:
\begin{align*}
\iota(P_{\text{ess}}) &= -\zeta(1/2) \approx 1.46035 \quad \text{(positive)} \\
\iota(Z_F) &= \zeta(1/2) \approx -1.46035 \quad \text{(negative)}
\end{align*}
The conservation law states that these opposite-signed values sum to zero, reflecting a fundamental duality.
\end{remark}

\subsection{Geometric Riemann Hypothesis}

\begin{conjecture}[Geometric Riemann Hypothesis]
The Riemann Hypothesis is equivalent to the statement that $Z_F$ exhibits statistical self-similarity with specific scaling properties related to the pair correlation of zeros.
\end{conjecture}

\begin{justification}
If all zeros lie on the critical line $\text{Re}(s)=1/2$, then the sequence $\{a_n\}$ derived from $\{\gamma_n\}$ should exhibit specific statistical properties (uniform distribution modulo 4, pair correlation matching GUE predictions). These statistical properties would manifest as geometric self-similarity in $Z_F$ at multiple scales. Conversely, if zeros exist off the critical line, the sequence $\{a_n\}$ would exhibit anomalies detectable in the fractal structure of $Z_F$.
\end{justification}

\section{Axiomatization of the Information Measure}

To place the information measure on firm theoretical ground, we propose the following axioms:

\begin{enumerate}[label=\textbf{(A\arabic*)}]
\item \textbf{Normalization}: For the empty set and singleton sets, $\iota(\emptyset) = 0$ and $\iota(\{x\}) = 0$ for any single element $x$.

\item \textbf{L-function Connection}: If a mathematical structure $M$ has a naturally associated L-function $L_M(s)$ and characteristic point $s_M \in \C$, then $\iota(M) = \pm L_M(s_M)$ where the sign is determined by the structure's role (e.g., primes vs. zeros).

\item \textbf{Additivity for Independent Structures}: If $M_1$ and $M_2$ are ``independent'' structures (in a sense to be made precise), then $\iota(M_1 \sqcup M_2) = \iota(M_1) + \iota(M_2)$.

\item \textbf{Duality}: If $M$ and $M^*$ are dual structures (related by functional equations or Fourier-type transforms), then $\iota(M) + \iota(M^*) = 0$ (in principle).

\item \textbf{Anti-Monotonicity}: If $M_1 \subset M_2$ and $M_2 \setminus M_1$ has positive information content, then $|\iota(M_1)| \ge |\iota(M_2)|$.

\item \textbf{Continuity}: Small perturbations in the construction of $M$ should lead to small changes in $\iota(M)$.

\item \textbf{Default Value}: For structures with no known L-function connections, $\iota(M) = 0$. Additionally, the trivial zeros of the Riemann zeta function (at negative even integers), which have informational cardinality $I(\text{trivial zeros}) = (0, 0, 0)$.
\end{enumerate}

\begin{theorem}[Consistency]
The definitions $\iota(P_{\text{ess}}) = -\zeta(1/2)$, $\iota(Z_F) = \zeta(1/2)$, and $\iota(C_{1/3}) = 0$ satisfy axioms (A1)--(A7).
\end{theorem}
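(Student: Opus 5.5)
The plan is to verify the seven axioms one at a time against the three assigned values $\iota(P_{\text{ess}})=-\zeta(1/2)$, $\iota(Z_F)=\zeta(1/2)$ and $\iota(C_{1/3})=0$. Most of the axioms are conditional (``if $M$ has an associated L-function'', ``if $M_1,M_2$ are independent'', ``if $M_1\subset M_2$''). For each one I will therefore either show that its hypothesis is satisfied and the conclusion holds by definition, or show that its hypothesis is never triggered among the three sets, so the axiom holds vacuously. Since the axioms are stated informally, I will fix explicit readings first and prove consistency relative to those readings. I do not claim more than that.

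\textbf{The axioms settled by definition or vacuity.}
\begin{enumerate}
\item[(A1)] It concerns only $\emptyset$ and singletons. None of the three sets is of this form: each has cardinality $\mathfrak{c}$ by Theorem 3.5, the analogous part of the theorem on $C_{1/3}$, and the basic properties of $Z_F$. So (A1) imposes no constraint.
\item[(A2)] Take $L_M=\zeta$, with $s_M=1/2$ for both $P_{\text{ess}}$ and $Z_F$, and sign $-$ for the ``prime'' side and $+$ for the ``zero'' side. Then (A2) reproduces the definition of $\iota(P_{\text{ess}})$ and the conjectured value of $\iota(Z_F)$ verbatim.
\item[(A7)] Using the justification in Section 4, $C_{1/3}$ has no associated L-function, so $\iota(C_{1/3})=0$ is exactly (A7). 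The clause about trivial zeros does not involve our three sets.
\item[(A4)] Declare $(P_{\text{ess}},Z_F)$ to be a dual pair, which is the motivation given in Section 6. The identity $-\zeta(1/2)+\zeta(1/2)=0$ then gives (A4).
\item[(A3)] Additivity is only required for disjoint unions of ``independent'' structures. No disjoint union of two of our sets is assigned a value, so (A3) is vacuous. For robustness I would also note that defining $\iota(P_{\text{ess}}\sqcup C_{1/3})$ by (A3) yields $-\zeta(1/2)$, which produces no contradiction.
\end{enumerate}

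\textbf{(A5): checking that no inclusions occur.} This axiom requires showing that no nontrivial inclusion holds among the three sets.
\begin{itemize}
\item $P_{\text{ess}}\subset[1/4,1]$, by the form of $\phi$ in Theorem 3.5, while $0\in C_{1/3}$. Hence $C_{1/3}\not\subset P_{\text{ess}}$.
\item Conversely, $1/4+1/16+\cdots=1/3\in P_{\text{ess}}$ lies in the removed gap $(1/8,7/8)$ of the construction of $C_{1/3}$. Hence $P_{\text{ess}}\not\subset C_{1/3}$.
\item For $Z_F$ versus $P_{\text{ess}}$, each stage of both constructions keeps two of four quarters, so an inclusion at stage $n$ forces $\{a_n,a_n+2\}=\{1,3\}$. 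Thus $Z_F=P_{\text{ess}}$ exactly when every $a_n$ is odd, and there is no proper inclusion otherwise. Even in the coincident case $M_2\setminus M_1=\emptyset$, which has no positive information content.
\item For $Z_F$ versus $C_{1/3}$, the different contraction ratios rule out inclusion, by a gap argument like the one above.
\end{itemize}
So (A5) holds vacuously. There is one caveat. If all $a_n$ were odd, then $Z_F=P_{\text{ess}}$ as sets while the assigned values differ. I would resolve this by treating $\iota$ as a function of the construction (the data $(a_n)$ together with the prime/zero role) rather than of the bare set. I would then check numerically, e.g.\ with $\gamma_1\approx14.1347$, $\gamma_1/2\pi\approx 2.2496$, $a_1=0$, that the actual sequence is not all odd.

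\textbf{(A6): the main obstacle.} This is where I expect the real difficulty. The natural family of symmetric two-piece Cantor sets with ratio $r$ varies continuously from $C_{1/3}$ ($r=1/8$) toward sets like $P_{\text{ess}}$. Along such a path $\iota$ would jump from $0$ to $-\zeta(1/2)$. My first approach will be to read ``small perturbation of the construction'' as a perturbation within a fixed combinatorial class: fixed base $4$ and the prescribed digit rule, perturbed only by the probabilistic fluctuations of Section 8. Within that class $\iota$ is constant, so continuity is trivial. If that reading is deemed too restrictive, the fallback is to use the discrete topology on the L-function data $(L_M,s_M,\varepsilon_M)$. With that topology the assignment is locally constant, hence continuous. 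I would state explicitly that consistency is proved relative to this interpretation.
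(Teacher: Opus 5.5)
The step that fails is your claim that (A5) holds vacuously. The error starts with your remark that (A1) ``imposes no constraint.'' (A1) puts $\emptyset$ and every singleton into the domain of $\iota$ with value $0$, so the inclusions relevant to (A5) are not only those among $P_{\text{ess}}$, $Z_F$, $C_{1/3}$. Take $M_1=\emptyset\subsetneq M_2=P_{\text{ess}}$. Then $M_2\setminus M_1=P_{\text{ess}}$, whose information content $-\zeta(1/2)\approx 1.46$ is positive by the very definition under test. So (A5) demands $0=|\iota(\emptyset)|\ge|\iota(P_{\text{ess}})|\approx 1.46$, which is false. More generally, applying (A5) to $\emptyset\subset M$ shows that (A1) and (A5) together force $\iota(M)\le 0$ for every $M$ in the domain. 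Since (A4) makes one of $\iota(P_{\text{ess}}),\iota(Z_F)$ positive whenever they are nonzero, reversing the sign convention would not help either. The theorem is therefore false under the literal reading of the axioms, and no choice of readings for (A2), (A4) or (A6) rescues it. The only repair is to weaken (A5) itself, for instance by restricting it to nonempty $M_1$ that carry their own L-function data, and to say explicitly that you are proving consistency of that modified system. The paper's own sketch gives you nothing to fall back on here. It ``verifies'' (A5) by citing $I(P_{\text{ess}})>I(C_{1/3})$ and $I(P_{\text{ess}})>I(\mathbb{N})$; these are not instances of inclusion, they compare the triple $I$ rather than $|\iota|$, and exhibiting examples cannot verify a universally quantified axiom anyway.

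The same mechanism refutes your robustness remark under (A3). Suppose you adjoin $P_{\text{ess}}\sqcup C_{1/3}$ with the value $-\zeta(1/2)$ that (A3) prescribes. Then $C_{1/3}\subset P_{\text{ess}}\sqcup C_{1/3}$ with difference $P_{\text{ess}}$ of positive content, so (A5) forces $|\iota(P_{\text{ess}}\sqcup C_{1/3})|\le|\iota(C_{1/3})|=0$, which contradicts (A3). Your inclusion checks among the three sets themselves are correct. For $Z_F$ versus $C_{1/3}$, note that the stage-one gaps do the work, not the differing contraction ratios, which by themselves do not rule out inclusion. Treating (A6) by a declared choice of topology is acceptable provided you present it as consistency by fiat. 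Neither point, however, addresses the failure of (A5).
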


\begin{proof}[Proof sketch]
\begin{itemize}
\item (A1): $\iota(\emptyset) = 0$ and $\iota(\{x\}) = 0$ hold by definition.
\item (A2): $P_{\text{ess}}$ is associated with the Riemann zeta function at $s=1/2$, with negative sign for primes.
\item (A3): Not directly applicable; our structures are not disjoint unions.
\item (A4): $P_{\text{ess}}$ and $Z_F$ are dual, and $\iota(P_{\text{ess}}) + \iota(Z_F) = -\zeta(1/2) + \zeta(1/2) = 0$.
\item (A5): The anti-monotonicity axiom represents a revolutionary principle in holographic theory: ``holographic proper subsets can be greater than the whole set.'' This radical property asserts that a part can be strictly greater than the whole---for instance, the informational cardinality of the fractal prime set $P_{\text{ess}}$ can be strictly greater than both the natural numbers $\mathbb{N}$ and the Cantor set $C_{1/3}$ (which has the cardinality of the continuum). This counterintuitive principle reflects the holographic nature of information, where a proper subset can encode strictly more information than its superset. The comparison of informational cardinalities throughout this paper, especially in Section 5, provides foundational examples establishing this axiom.
\item (A6): Section 8 analyzes perturbation effects.
\item (A7): $C_{1/3}$ has no known L-function connection, so $\iota(C_{1/3}) = 0$. Similarly, the trivial zeros have $I(\text{trivial zeros}) = (0, 0, 0)$.
\end{itemize}
\end{proof}

\section{Sensitivity and Robustness Analysis}

\subsection{Perturbations of $P_{\text{ess}}$}

We analyze how modifications to the construction of $P_{\text{ess}}$ affect its properties:

\begin{enumerate}
\item \textbf{Probabilistic Retention}: Instead of deterministically keeping positions 1 and 3, retain each with probability $p \in (0,1)$. The expected Hausdorff dimension becomes $\frac{\log(2p)}{\log 4}$. For $p=1$ (our construction), $\delta = 1/2$. For $p < 1$, $\delta < 1/2$.

\item \textbf{Chebyshev Bias}: Incorporate the bias by retaining position 3 with slightly higher probability than position 1. This introduces asymmetry but preserves $\delta \approx 1/2$ for small biases.

\item \textbf{Higher Moduli}: Generalize to primes modulo $q > 4$. For modulo 8, retain positions $\{1,3,5,7\}$ (4 of 8), giving $\delta = \frac{\log 4}{\log 8} = \frac{2}{3}$.

\item \textbf{Weighted Constructions}: Assign different scaling factors to different residue classes, creating non-uniform fractals with potentially different dimensions.
\end{enumerate}

\begin{proposition}[Robustness of Dimension]
Small perturbations in retention probabilities or scaling factors lead to continuous changes in Hausdorff dimension, confirming the robustness of the $\delta = 1/2$ value for the idealized construction.
\end{proposition}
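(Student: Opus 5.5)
I would first make the statement precise by fixing two parametrized families and proving continuity of the dimension in each parameter separately. The \emph{scaling family} consists of the self-similar sets $K(r_1,r_3)$ generated by the maps $g_1(x)=r_1x+c_1$ and $g_3(x)=r_3x+c_3$, with $(r_1,r_3)$ near $(1/4,1/4)$ and the translations chosen so that $g_1([0,1])$ and $g_3([0,1])$ are disjoint subintervals of $[0,1]$. The \emph{probabilistic family} consists of the random sets $P_p$ obtained by retaining each of the subintervals labelled 1 and 3 independently with probability $p$, as in item 1 above. The claim then splits into two statements. First, $(r_1,r_3)\mapsto\dim_H K(r_1,r_3)$ is continuous at $(1/4,1/4)$ with value $1/2$ there. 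Second, $p\mapsto\dim_H P_p$ (almost surely, conditioned on non-extinction) is continuous on $(1/2,1]$ and equals $1/2$ at $p=1$.

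For the deterministic family I would argue as in the proof of Theorem 3.4. The open set condition continues to hold with the open set $(0,1)$, because the two image intervals remain disjoint for small perturbations; this disjointness is the only point where ``small'' enters. Hutchinson's theorem then gives $\dim_H K=s(r_1,r_3)$, the unique root of
\[
F(s,r_1,r_3)=r_1^s+r_3^s-1=0 .
\]
Uniqueness holds because $\partial_s F=r_1^s\log r_1+r_3^s\log r_3<0$ for $r_i\in(0,1)$. By the implicit function theorem, $s$ is a $C^1$ (indeed real-analytic) function of $(r_1,r_3)$ near $(1/4,1/4)$, where $s=1/2$. This yields continuity, and also an explicit derivative $\partial s/\partial r_i=-s\,r_i^{s-1}/\partial_sF$ that quantifies the robustness. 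The ``weighted constructions'' of item 4 are covered by the same argument.

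For the random family I would encode $P_p$ as a Galton--Watson tree with offspring distribution $\mathrm{Bin}(2,p)$, mean $m=2p$, and contraction $1/4$ at each generation. The standard theorem on random Cantor sets (Falconer; Mauldin--Williams) states that, almost surely on non-extinction, $\dim_H P_p=\log m/\log 4=\log(2p)/\log 4$, and that non-extinction has positive probability exactly when $m>1$, that is, when $p>1/2$. The function $p\mapsto\log(2p)/\log 4$ is continuous on $(1/2,1]$, tends to $0$ as $p\downarrow 1/2$, and equals $1/2$ at $p=1$, which recovers $\dim_H P_{\text{ess}}$. The Chebyshev-bias variant of item 2, with unequal probabilities $p_1,p_3$, gives $\dim_H=\log(p_1+p_3)/\log 4$, which is likewise continuous.

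I expect the main obstacle to be the random case. The quantity the paper calls the ``expected dimension'' is really an almost-sure dimension conditioned on survival, and the lower bound requires a genuine argument: either the martingale $W_n=Z_n/m^n$ with its natural limit measure together with the mass distribution principle, or a direct citation of the random-construction theorem. I would cite that theorem rather than reprove it. I would also remark that outside the regime where the image intervals stay disjoint, continuity can fail if overlaps occur, so the proposition should be read as a local statement near the idealized construction.
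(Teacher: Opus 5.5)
Your proposal is correct. The paper gives no proof of this proposition. It is asserted right after the heuristic list in Section 8.1, whose only quantitative content is the claimed ``expected'' dimension $\log(2p)/\log 4$ and an unquantified mention of weighted scaling factors. Your argument is therefore not a competing route so much as a rigorous version of the paper's implicit reasoning, namely continuity of an explicit dimension formula. What it adds is real. The Moran equation together with the implicit function theorem gives real-analytic dependence on $(r_1,r_3)$ as long as the open set condition survives. The Falconer/Mauldin--Williams theorem replaces the paper's loose ``expected dimension'' by the almost-sure dimension on survival, and it brings in the survival threshold $p>1/2$ that the paper never states.

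Two small additions. First, the conclusion does not depend on how ``expected'' is read. With the convention $\dim_H\emptyset=0$, the literal expectation is $(1-q(p))\log(2p)/\log 4$, where $q(p)=((1-p)/p)^2$ is the extinction probability. This is also continuous on $(1/2,1]$ and equals $1/2$ at $p=1$. Second, you need not split the problem into two families. The random-construction theorem gives the dimension as the root of $p_1r_1^s+p_3r_3^s=1$, whose $s$-derivative $p_1r_1^s\log r_1+p_3r_3^s\log r_3$ is negative. The same implicit-function argument then gives joint continuity in $(p_1,p_3,r_1,r_3)$ near $(1,1,1/4,1/4)$.

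Your overlap caveat is a hypothesis that the paper's statement silently needs. For example, the attractor of $x\mapsto rx$, $x\mapsto rx+t$ with $r<1/2$ has dimension $\log 2/\log(1/r)$ for every $t\neq 0$, but it is a single point at $t=0$.
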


\subsection{Sensitivity of $Z_F$ to Zero Ordering}

The construction of $Z_F$ depends on the ordering of zeros. We consider:

\begin{enumerate}
\item \textbf{Standard Ordering}: By increasing imaginary part (our construction).
\item \textbf{Random Permutation}: Randomly reorder the zeros. The resulting set has the same Hausdorff dimension $1/2$ but different fine structure.
\item \textbf{Weighted Ordering}: Order by $|\zeta'(1/2 + i\gamma_n)|$ or other zero-specific quantities.
\end{enumerate}

\begin{proposition}[Dimension Invariance]
Regardless of ordering, $\dim_H(Z_F) = 1/2$, but finer geometric properties (multifractal spectrum, local dimensions) may vary.
\end{proposition}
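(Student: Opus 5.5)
The plan is to reduce the statement to the fact that, for every ordering of the zeros, the set $Z_F$ is a deterministic Moran-type (homogeneous Cantor) set. Whatever ordering is used (increasing imaginary part, a random permutation, or ordering by $|\zeta'(1/2+i\gamma_n)|$), it only changes the digit sequence $\{a_n\}\subset\{0,1,2,3\}$. At stage $n$ we retain from each interval of length $4^{-(n-1)}$ exactly the two subintervals with labels $a_n$ and $a_n+2 \pmod 4$. These have length $4^{-n}$ and are separated by a gap of length at least $4^{-n}$. So $J_n$ consists of exactly $2^n$ intervals of length $4^{-n}$, independently of $\{a_n\}$. Consequently $Z_F$ is the image of Cantor space under
\[
\sigma \mapsto \sum_{n\ge1} d_n(\sigma_n)\,4^{-n},
\]
where $d_n(0)=a_n$ and $d_n(1)=a_n+2 \bmod 4$. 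The dimension claim then becomes a statement about this uniform family, with the ordering entering only through the irrelevant labels.

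\textbf{Upper bound.} Cover $Z_F$ by the $2^n$ intervals of $J_n$. This gives $\mathcal H^s_{4^{-n}}(Z_F)\le 2^n 4^{-ns}$, which is bounded when $s=1/2$. Hence $\dim_H Z_F\le 1/2$, and in fact $\overline{\dim}_B Z_F\le 1/2$.

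\textbf{Lower bound.} Since the construction is not generated by a fixed IFS, I will not invoke Hutchinson's theorem directly, contrary to the earlier proof of Theorem 6.2. Instead I will use the mass distribution principle. Let $\mu$ assign mass $2^{-n}$ to each stage-$n$ interval. Take an interval $U$ with $4^{-(n+1)}\le |U|<4^{-n}$. Because distinct stage-$n$ intervals are separated by gaps of length at least $4^{-n}$, such a $U$ meets at most two stage-$n$ intervals. Therefore
\[
\mu(U)\le 2\cdot 2^{-n} = 4\cdot 2^{-(n+1)} \le 4\,|U|^{1/2},
\]
and the mass distribution principle gives $\dim_H Z_F\ge 1/2$.

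\textbf{Main obstacle: the gap estimate.} The delicate point is checking that the gap estimate holds uniformly for every choice of $a_n$. Within one parent interval, the labels $\{a_n,a_n+2\}$ are either $\{0,2\}$ or $\{1,3\}$, so the internal gap is exactly one subinterval of length $4^{-n}$. Between children of different parents, the separation is at least the parent-level gap, which is larger still. Once this is verified, the argument is ordering-free.

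\textbf{The "may vary" clause.} Here no proof is needed beyond exhibiting variation. I would compare two orderings whose digit sequences differ on a set of positive upper density. This gives visibly different sets: for example, $a_n\equiv 0$ yields a set containing $0$, while $a_n\equiv 1$ yields $P_{\text{ess}}$, which does not contain $0$. Local dimensions at individual points are all $1/2$ for the uniform measure, so I would phrase the finer-property claim as possible dependence rather than prove a definite difference.
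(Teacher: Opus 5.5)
Your proof of the dimension claim is correct, but it follows a different route from the paper. The paper gives no separate argument for this proposition. It rests on the theorem on basic properties of $Z_F$ in Section~6, whose proof inserts $N=2$, $r=1/4$ into Hutchinson's equation $2\cdot(1/4)^s=1$. That step is not justified as written, because the retained labels change with $n$ and the construction is not presented as the attractor of any fixed IFS. Your covering bound, combined with the mass distribution principle and the uniform gap estimate, avoids this issue. It works for any homogeneous Moran construction and also yields $\overline{\dim}_B Z_F\le 1/2$ and $\mathcal{H}^{1/2}(Z_F)>0$. (Distinct stage-$n$ intervals are at distance at least $4^{-n}>|U|$, so your $U$ meets at most one of them, and the constant $4$ improves to $2$.)

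A short observation makes the paper's appeal to Hutchinson legitimate. Since $\{a_n,\,a_n+2 \bmod 4\}=\{0,2\}+\epsilon_n$ with $\epsilon_n=a_n \bmod 2$, we get
\[
Z_F=K+c,\qquad K=\Bigl\{\sum_{n\ge1}e_n4^{-n}: e_n\in\{0,2\}\Bigr\},\qquad c=\sum_{n\ge1}\epsilon_n4^{-n}.
\]
Here $K$ is the attractor of $x\mapsto x/4$ and $x\mapsto x/4+1/2$, which satisfy the open set condition with $(0,1)$. So $Z_F$ is itself self-similar under $x\mapsto x/4+3c/4$ and $x\mapsto x/4+1/2+3c/4$, and $\dim_H Z_F=\dim_H K=1/2$ for every ordering.

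The same identity shows that your treatment of the second clause does not hold up.

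\begin{itemize}
\item For every ordering, $Z_F$ is a translate of the single set $K$. The stage-$n$ pieces of $Z_F$ are the translates by $c$ of the stage-$n$ pieces of $K$, so the natural measure moves with the set.
\item Consequently the local dimensions, the multifractal spectrum (the single point $f(1/2)=1/2$), $\mathcal{H}^{1/2}(Z_F)$ and the entire gap structure coincide for all orderings. The ordering affects only the position of $Z_F$, through the number $c$.
\item Your example ($0\in Z_F$ when $a_n\equiv0$, but $0\notin P_{\text{ess}}$) is exactly such a translation, since $P_{\text{ess}}=K+1/3$. It says nothing about finer geometry.
\item Your claim that digit sequences differing on a set of positive upper density give different sets is false. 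Only the parity sequence $(\epsilon_n)$ matters: $a_n\equiv0$ and $a_n\equiv2$ both give $K$, whereas a single change of parity already moves the set.
\end{itemize}

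The paper offers no argument for this clause either. Read as a statement about isometry-invariant structure it is false; the correct statement is that all such finer properties are independent of the ordering.
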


\begin{conjecture}[Canonical Ordering]
The standard ordering by increasing imaginary part is canonical in the sense that it maximizes certain information-theoretic measures (e.g., mutual information with prime distribution).
\end{conjecture}

\section{Connections to Other Mathematical Frameworks}

\subsection{Noncommutative Geometry}

Alain Connes' spectral interpretation of the Riemann zeta function provides a geometric framework where:
\begin{itemize}
\item The adele class space $\mathbb{A}_\Q / \Q^*$ serves as a noncommutative space.
\item Primes correspond to classical points.
\item Zeros correspond to quantum oscillations in the spectral action.
\end{itemize}

Our informational cardinality framework complements this by providing explicit fractal realizations of these abstract structures.

\subsection{Algorithmic Information Theory}

The Kolmogorov complexity $K(x)$ of a string $x$ measures its compressibility. For our fractals:
\begin{itemize}
\item $P_{\text{ess}}$ has low Kolmogorov complexity (simple recursive rule).
\item $Z_F$ has high Kolmogorov complexity (depends on the complex sequence of zeta zeros).
\end{itemize}

The information measure $\iota$ captures a different aspect: connection to deep arithmetic, not algorithmic compressibility.

\subsection{Statistical Mechanics and Partition Functions}

The Riemann zeta function can be interpreted as a partition function:
\[
\zeta(s) = \sum_{n=1}^\infty n^{-s} = \text{Tr}(e^{-sH})
\]
where $H$ is a suitable ``Hamiltonian.'' Our construction suggests:
\begin{itemize}
\item $P_{\text{ess}}$ represents the ``configuration space'' of primes.
\item $Z_F$ represents the ``momentum space'' of zeros.
\item The information conservation law $\iota(P_{\text{ess}}) + \iota(Z_F) = 0$ may be more fundamental than energy conservation, resonating with the information-theoretic ontology discussed in Section 10.
\end{itemize}

\subsection{Fractal Geometry and Multifractal Analysis}

Beyond Hausdorff dimension, multifractal analysis studies the spectrum of local dimensions. For $P_{\text{ess}}$:
\begin{itemize}
\item The multifractal spectrum $f(\alpha)$ is a single point: $f(1/2) = 1/2$ (monofractal).
\item For perturbed versions, the spectrum broadens, revealing richer structure.
\end{itemize}

For $Z_F$:
\begin{itemize}
\item The multifractal spectrum depends on the statistical properties of zeros.
\item Under RH and GUE predictions, specific spectral properties are expected.
\end{itemize}

\section{Conclusion and Future Directions}

\subsection{Summary of Results}

This paper introduced informational cardinality $I(M) = (\alpha(M), \delta(M), \iota(M))$ as a unified measure of mathematical complexity that integrates:
\begin{itemize}
\item Set-theoretic size ($\alpha$)
\item Geometric complexity ($\delta$)
\item Arithmetic information content ($\iota$)
\end{itemize}

We constructed the essential fractal prime set $P_{\text{ess}}$ with $\dim_H(P_{\text{ess}}) = 1/2$ and $\iota(P_{\text{ess}}) = -\zeta(1/2) \approx 1.46035$, demonstrating that $I(P_{\text{ess}}) > I(C_{1/3})$ despite equal traditional cardinality. The fractal zero set $Z_F$ was proposed with conjectured information measure $\iota(Z_F) = \zeta(1/2)$, leading to the Information Conservation Law: $\iota(P_{\text{ess}}) + \iota(Z_F) = 0$.

\subsection{Implications for the Riemann Hypothesis}

Our framework suggests a geometric perspective on RH:
\begin{itemize}
\item The critical line $\text{Re}(s) = 1/2$ corresponds to Hausdorff dimension $1/2$.
\item Primes and zeros are dual structures with opposite information measures.
\item RH may be equivalent to specific self-similarity properties of $Z_F$.
\end{itemize}

While this does not constitute a proof, it provides new intuition and potential avenues for attack.

\subsection{Open Questions}

\begin{enumerate}
\item \textbf{Prove or disprove} the Information Conservation Conjecture: $\iota(P_{\text{ess}}) + \iota(Z_F) = 0$.

\item \textbf{Characterize} the multifractal spectrum of $Z_F$ and relate it to the pair correlation of zeros.

\item \textbf{Extend} informational cardinality to other L-functions (Dirichlet L-functions, modular forms, automorphic L-functions).

\item \textbf{Develop} a rigorous theory of ``independence'' for axiom (A3).

\item \textbf{Explore} connections to quantum chaos, random matrix theory, and the Hilbert-Pólya conjecture.

\item \textbf{Investigate} computational methods to numerically verify properties of $Z_F$ using known zeros.

\item \textbf{Generalize} to higher-dimensional fractals and connections to algebraic geometry.

\item \textbf{Study} the relationship between informational cardinality and other complexity measures (Kolmogorov complexity, logical depth, computational complexity).
\end{enumerate}

\subsection{Philosophical Reflections}

Informational cardinality suggests that mathematical objects possess intrinsic ``information content'' beyond their set-theoretic size. This resonates with:
\begin{itemize}
\item \textbf{Platonism}: Mathematical structures have objective properties independent of our descriptions.
\item \textbf{Structuralism}: What matters is not the elements themselves but the relationships and patterns they embody.
\item \textbf{Information-theoretic ontology}: Reality (including mathematical reality) is fundamentally informational.
\end{itemize}

The duality between primes and zeros, mediated by fractal geometry, hints at deep unifying principles yet to be fully understood.

\subsection{Final Remarks}

The framework presented here is exploratory and speculative in parts, particularly the Information Conservation Conjecture and the Geometric Riemann Hypothesis. However, the rigorous constructions of $P_{\text{ess}}$ and $Z_F$, the proof that $I(P_{\text{ess}}) > I(C_{1/3})$, and the axiomatic foundation for $\iota$ provide solid ground for future investigation. We hope this work stimulates further research at the intersection of set theory, fractal geometry, and analytic number theory.

\section*{Acknowledgments}

The author thanks the mathematical community for the foundational work upon which this research builds, and acknowledges the use of computational tools for numerical verification of zeta function values and zero locations.

\appendix

\section{Technical Proofs}

\subsection{Proof of Hausdorff Dimension Formula}

\begin{theorem}
For a self-similar set $F$ satisfying the open set condition with $N$ similarity maps having contraction ratios $r_1, \ldots, r_N$, the Hausdorff dimension $s$ satisfies
\[
\sum_{i=1}^N r_i^s = 1.
\]
\end{theorem}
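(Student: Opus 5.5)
This is Moran--Hutchinson's theorem. Let $s$ be the unique root of $\sum_{i=1}^N r_i^s=1$; it exists and is unique because $\phi(t)=\sum r_i^t$ is continuous and strictly decreasing, with $\phi(0)=N\ge 1$ and $\phi(t)\to 0$. The plan is to prove $\dim_H F\le s$ by natural covers and $\dim_H F\ge s$ by a mass distribution argument that uses the open set condition. Throughout I write $F=\bigcup_i f_i(F)$, where $f_i$ is a similarity with ratio $r_i\in(0,1)$. For a word $\mathbf{i}=i_1\cdots i_k$, set $f_{\mathbf{i}}=f_{i_1}\circ\cdots\circ f_{i_k}$ and $r_{\mathbf{i}}=r_{i_1}\cdots r_{i_k}$.

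\textbf{Upper bound.} Iterating the self-similarity gives $F=\bigcup_{|\mathbf{i}|=k}f_{\mathbf{i}}(F)$ for every $k$. Here $\operatorname{diam} f_{\mathbf{i}}(F)=r_{\mathbf{i}}\operatorname{diam}F\le (\max r_i)^k\operatorname{diam}F\to 0$. Multiplicativity gives $\sum_{|\mathbf{i}|=k}r_{\mathbf{i}}^s=\bigl(\sum_i r_i^s\bigr)^k=1$. Hence $\mathcal{H}^s_\delta(F)\le (\operatorname{diam}F)^s$ for every $\delta>0$. So $\mathcal{H}^s(F)<\infty$ and $\dim_H F\le s$; this step needs no separation hypothesis.

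\textbf{Lower bound.} Let $\mu$ be the natural self-similar measure: the image of the Bernoulli measure with weights $r_i^s$ on $\{1,\dots,N\}^{\mathbb{N}}$ under the coding map. Then $\mu(f_{\mathbf{i}}(F))\le r_{\mathbf{i}}^s$. By the mass distribution principle, it suffices to show $\mu(U)\le C|U|^s$ for every set $U$ of small diameter. To do this, fix $U$ with $|U|=\rho<1$. Consider the stopping-time family $Q_\rho$ of words $\mathbf{i}$ with $r_{\mathbf{i}}\le\rho< r_{\mathbf{i}^-}$, where $\mathbf{i}^-$ drops the last letter. Then $F$ is covered by the pieces $f_{\mathbf{i}}(F)$ with $\mathbf{i}\in Q_\rho$, and each of these words satisfies $r_{\mathbf{i}}\ge (\min_j r_j)\rho$.

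\textbf{Main obstacle: the counting step.} I must bound how many $\mathbf{i}\in Q_\rho$ have $f_{\mathbf{i}}(F)$ meeting $U$, uniformly in $\rho$; this is exactly where the open set condition enters. Let $V$ be the open set from the condition, so that $f_i(V)\subset V$ and the sets $f_i(V)$ are pairwise disjoint. Then $F\subset\overline V$, and the sets $f_{\mathbf{i}}(V)$ for $\mathbf{i}\in Q_\rho$ are pairwise disjoint, since $Q_\rho$ is an antichain. Choose radii $a_1\le a_2$ such that $V$ contains a ball of radius $a_1$ and is contained in a ball of radius $a_2$. Then each $f_{\mathbf{i}}(V)$ contains a ball of radius $(\min r_j)a_1\rho$ and lies in a ball of radius $a_2\rho$. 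If $f_{\mathbf{i}}(F)$ meets $U$, then $f_{\mathbf{i}}(V)$ lies in the ball of radius $(1+2a_2)\rho$ around a point of $U$. Comparing Lebesgue volumes in $\mathbb{R}^n$ bounds the number of such words by $q=(1+2a_2)^n(\min r_j)^{-n}a_1^{-n}$, a constant independent of $\rho$. Therefore
\[
\mu(U)\le \sum_{\mathbf{i}\in Q_\rho,\ f_{\mathbf{i}}(F)\cap U\ne\emptyset}\mu(f_{\mathbf{i}}(F))\le q\rho^s=q|U|^s .
\]
Thus $\mathcal{H}^s(F)\ge \mu(F)/q>0$, so $\dim_H F\ge s$. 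Combined with the upper bound, this gives $\dim_H F=s$.

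The application to the paper's examples is immediate. For $P_{\text{ess}}$ and $Z_F$ we have $N=2$, $r=1/4$, and $V=(0,1)$. For $C_{1/3}$ we have $N=2$, $r=1/8$, and $V=(0,1)$.
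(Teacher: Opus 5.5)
The paper gives no argument for this theorem: its proof just cites Hutchinson's theorem and then solves $2\cdot(1/4)^s=1$ for $P_{\text{ess}}$. Your proposal supplies the standard textbook proof. For $\dim_H F\le s$ you use natural covers, which need no separation hypothesis. For $\dim_H F\ge s$ you use the self-similar measure, the stopping-time antichain $Q_\rho$, a volume-counting bound coming from the open set condition, and the mass distribution principle. This gives an actual derivation and shows exactly where the open set condition enters. The upper bound, the disjointness of the sets $f_{\mathbf i}(V)$ for $\mathbf i\in Q_\rho$, and the counting constant $q$ are all correct.

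There is, however, one unjustified step: the claim $\mu(f_{\mathbf i}(F))\le r_{\mathbf i}^s$. For the push-forward of the Bernoulli measure, the inequality that is immediate goes the other way, $\mu(f_{\mathbf i}(F))\ge r_{\mathbf i}^s$, because the cylinder $[\mathbf i]$ maps into $f_{\mathbf i}(F)$. Sequences not beginning with $\mathbf i$ can also land in $f_{\mathbf i}(F)$ when pieces overlap.
\begin{itemize}
\item Without separation the claim is false. Take two copies of $x\mapsto x/2$: then $s=1$, $F=\{0\}$, $\mu=\delta_0$, and $\mu(f_1(F))=1>1/2$.
\item Under the open set condition the claim is true, but only because the overlaps $f_{\mathbf i}(F)\cap f_{\mathbf j}(F)$ are $\mu$-null. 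That fact is usually derived only after one knows $0<\mathcal H^s(F)<\infty$, which is the statement you are proving.
\end{itemize}

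The repair is easy, and it is how Falconer's proof runs: bound $\mu(U)$ through the code space rather than through the pieces. If $\pi(\omega)\in U$, then $\omega$ has a unique prefix $\mathbf i\in Q_\rho$, and $\pi(\omega)\in f_{\mathbf i}(F)$, so that piece meets $U$. Hence $\pi^{-1}(U)$ lies in the union of the cylinders $[\mathbf i]$ over the at most $q$ such words. Therefore $\mu(U)\le\sum r_{\mathbf i}^s\le q\rho^s$, and with this change your argument is complete.

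Your closing application to $Z_F$ is also slightly off. The retained digit pair $\{a_n,a_n+2 \bmod 4\}$ changes with $n$, so $Z_F$ is not the attractor of one fixed pair of maps with $V=(0,1)$. The pair is always $\{0,2\}$ or $\{1,3\}$, however. So $Z_F=P_{\text{ess}}-\sum_n(1-e_n)4^{-n}$ with $e_n=a_n \bmod 2$, a translate of $P_{\text{ess}}$, and its dimension is $1/2$ anyway.
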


\begin{proof}
This is Hutchinson's theorem. For $P_{\text{ess}}$, we have $N=2$ maps with $r_1 = r_2 = 1/4$, giving $2 \cdot (1/4)^s = 1$, hence $s = \log 2 / \log 4 = 1/2$.
\end{proof}

\subsection{Cardinality of Self-Similar Sets}

\begin{lemma}
Any perfect, uncountable, compact subset of $\mathbb{R}$ has cardinality $\mathfrak{c}$.
\end{lemma}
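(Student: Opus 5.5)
The plan is to prove the Lemma in two independent halves: the upper bound $|K| \le \mathfrak{c}$, and the lower bound $|K| \ge \mathfrak{c}$ obtained by embedding Cantor space $\{0,1\}^{\mathbb{N}}$ injectively into $K$. The two bounds then combine by Cantor--Schröder--Bernstein. The upper bound is immediate: $K \subset \mathbb{R}$ and $|\mathbb{R}| = \mathfrak{c}$. Only perfectness and compactness (completeness would suffice) are needed for the lower bound. Uncountability is then automatic and will not be used separately.

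For the lower bound I will build a \emph{Cantor scheme}. To each finite binary word $w$ I assign a nonempty closed set $K_w = K \cap \overline{B}_w$, where $\overline{B}_w$ is a closed interval. The construction is required to satisfy three conditions:
\begin{enumerate}[label=(\roman*)]
\item $K_{w0}, K_{w1} \subset K_w$;
\item $K_{w0} \cap K_{w1} = \emptyset$;
\item $\operatorname{diam} K_w \le 2^{-|w|}$.
\end{enumerate}

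I start with $K_\emptyset = K$, which is nonempty, since a perfect set is nonempty by convention; otherwise $K$ is not uncountable.

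For the inductive step, suppose $K_w$ is built and contains a point $x$ lying in the interior of $\overline{B}_w$ relative to $K$. Because $K$ is perfect, the relatively open set $K \cap \operatorname{int}\overline{B}_w$ contains a second point $y \neq x$. I then choose disjoint small closed intervals around $x$ and $y$, each inside $\operatorname{int}\overline{B}_w$ and of length at most $2^{-|w|-1}$. Each new piece again has a point of $K$ in its interior, so the inductive hypothesis is preserved.

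For $\sigma \in \{0,1\}^{\mathbb{N}}$, the sets $K_{\sigma|n}$ form a decreasing sequence of nonempty compact sets whose diameters tend to $0$. By Cantor's intersection theorem their intersection is a single point $\phi(\sigma) \in K$. If $\sigma \neq \tau$, they first differ at some index $n$, so $\phi(\sigma)$ and $\phi(\tau)$ lie in the disjoint sets $K_{w0}$ and $K_{w1}$ for the common prefix $w$. Hence $\phi$ is injective, and $|K| \ge 2^{\aleph_0} = \mathfrak{c}$.

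The main obstacle is the inductive step, and specifically keeping an \emph{interior} point of $K$ in each chosen piece. If the bookkeeping is done carelessly, the nonemptiness that perfectness provides can be lost. I will handle this by always centering the new intervals at points of $K$, as described above. As a remark, this result recovers the cardinality statements for $P_{\text{ess}}$, $C_{1/3}$ and $Z_F$ proved earlier, since each of these sets is compact and perfect.
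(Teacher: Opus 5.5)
Your proof is correct, and it goes further than the paper. The paper's ``proof'' of this lemma only calls it a standard result of descriptive set theory and then applies it to $P_{\text{ess}}$ and $Z_F$. Your Cantor scheme is precisely the standard argument behind that citation: a nonempty perfect subset of a complete metric space contains an injective (in fact homeomorphic) image of $\{0,1\}^{\mathbb{N}}$. So the difference is self-containment rather than strategy.

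What your route buys is a sharper statement:
\begin{itemize}
\item The hypothesis ``uncountable'' is needed only to ensure $K\neq\emptyset$. The empty set is perfect under the usual definition, so your remark that uncountability is not used should be amended: it is used exactly once, there.
\item Closedness in $\mathbb{R}$ suffices in place of compactness.
\item In the paper's application, uncountability of $P_{\text{ess}}$ and $Z_F$ becomes an output of the lemma rather than an input. The paper instead assumes it, having obtained it from an explicit bijection with Cantor space, which already gives $\mathfrak{c}$ directly.
\end{itemize}

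Two small bookkeeping points. Condition (iii) fails at the root unless $\operatorname{diam}K\le 1$; this is harmless, since only $\operatorname{diam}K_{\sigma|n}\to 0$ is used, so require (iii) only for $|w|\ge 1$. You should also fix $\overline{B}_\emptyset$ to be a closed interval containing $K$ in its interior, so that the inductive hypothesis holds at the first step.
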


\begin{proof}
Standard result from descriptive set theory. Since $P_{\text{ess}}$ and $Z_F$ are perfect and uncountable, they have cardinality $\mathfrak{c}$.
\end{proof}

\subsection{Numerical Verification of $\zeta(1/2)$}

The value $\zeta(1/2)$ can be computed using the functional equation and numerical integration:
\[
\zeta(s) = 2^s \pi^{s-1} \sin\left(\frac{\pi s}{2}\right) \Gamma(1-s) \zeta(1-s).
\]

For $s = 1/2$:
\[
\zeta(1/2) = \sqrt{2\pi} \sin(\pi/4) \Gamma(1/2) \zeta(1/2) = \sqrt{2\pi} \cdot \frac{\sqrt{2}}{2} \cdot \sqrt{\pi} \cdot \zeta(1/2).
\]

Direct numerical computation using Euler-Maclaurin summation gives:
\[
\zeta(1/2) \approx -1.460354508809586812889499152515440424,
\]
confirming the negative value. Thus:
\[
\iota(P_{\text{ess}}) = -\zeta(1/2) \approx 1.460354508809586812889499152515440424 > 0.
\]

\section{Extended Cardinality Indicators}

This appendix elaborates on the extension of the cardinality indicator $\alpha(M)$ mentioned in Section 2.2, providing a more refined classification of infinite cardinalities using descriptive set theory.

\subsection{Borel Hierarchy Extension}

For sets in Polish spaces, we can refine $\alpha(M)$ by incorporating the Borel hierarchy:

\begin{definition}[Extended Cardinality Indicator]
For a set $M$ in a Polish space, define
\[
\alpha_{\text{ext}}(M) =
\begin{cases}
0, & |M| \le \aleph_0, \\
1 + \beta(M), & |M| > \aleph_0,
\end{cases}
\]
where $\beta(M)$ encodes the Borel complexity:
\begin{itemize}
\item $\beta(M) = 0$ if $M$ is a Borel set
\item $\beta(M) = 1/n$ if $M$ is $\Sigma^0_n$ or $\Pi^0_n$ in the Borel hierarchy
\item $\beta(M) = 1$ if $M$ is analytic but not Borel
\item $\beta(M) = 2$ if $M$ is projective but not analytic
\end{itemize}
\end{definition}

\subsection{Descriptive Set-Theoretic Complexity}

Alternatively, we can use the Wadge hierarchy or effective descriptive set theory:

\begin{definition}[Effective Cardinality Indicator]
For effectively presented sets, define
\[
\alpha_{\text{eff}}(M) =
\begin{cases}
0, & M \text{ is computably enumerable}, \\
1, & M \text{ is } \Delta^0_2, \\
2, & M \text{ is } \Sigma^0_n \text{ for } n \ge 2, \\
\omega, & M \text{ is hyperarithmetic}, \\
\omega_1^{CK}, & M \text{ involves higher recursion theory}.
\end{cases}
\]
\end{definition}

\subsection{Application to Our Constructions}

For the sets considered in this paper:
\begin{itemize}
\item $P_{\text{ess}}$ and $C_{1/3}$ are closed sets, hence $\Pi^0_1$ in the Borel hierarchy, giving $\alpha_{\text{ext}} = 1 + 1 = 2$.
\item $Z_F$ depends on the sequence of zeta zeros, which is not effectively computable, placing it higher in the hierarchy.
\item The extended indicator provides finer distinctions for comparing sets of equal traditional cardinality.
\end{itemize}

These extensions demonstrate how informational cardinality can be further refined to capture additional layers of mathematical complexity, though for the main results of this paper, the binary indicator $\alpha(M) \in \{0,1\}$ suffices.

\section{Detailed Analysis of Axioms}

\subsection{Axiom (A1): Normalization}

The empty set and singleton sets have no structure, hence $\iota(\emptyset) = 0$ and $\iota(\{x\}) = 0$ are natural. These serve as the additive identity and baseline for minimal structures.

\subsection{Axiom (A2): L-function Connection}

This axiom formalizes the idea that deep arithmetic structures are encoded in special values of L-functions. Examples:
\begin{itemize}
\item $\zeta(2) = \pi^2/6$ (Basel problem)
\item $L(1, \chi)$ for Dirichlet characters (class number formulas)
\item Special values of modular L-functions (Birch and Swinnerton-Dyer conjecture)
\end{itemize}

\subsection{Axiom (A3): Additivity}

Independence requires careful definition. Two structures are independent if their L-functions factor: $L_{M_1 \sqcup M_2}(s) = L_{M_1}(s) \cdot L_{M_2}(s)$. Then $\iota(M_1 \sqcup M_2) = \log L_{M_1 \sqcup M_2}(s_0) = \log L_{M_1}(s_0) + \log L_{M_2}(s_0)$ for appropriate $s_0$.

However, our current definition uses $\iota = \pm L(s)$, not $\log L(s)$, so additivity requires refinement. This is a subject for future work.

\subsection{Axiom (A4): Duality}

The functional equation of $\zeta(s)$ relates $s$ and $1-s$:
\[
\xi(s) = \pi^{-s/2} \Gamma(s/2) \zeta(s) = \xi(1-s).
\]

This suggests a duality between structures at $s$ and $1-s$. For $s = 1/2$, the point is self-dual, but primes (related to $\zeta$ via Euler product) and zeros (related via explicit formula) exhibit duality with opposite signs.

\subsection{Axiom (A5): Anti-Monotonicity}

If $M_1 \subset M_2$ and $M_2$ contains additional arithmetic structure, then $|\iota(M_1)| \ge |\iota(M_2)|$. This anti-monotonicity axiom embodies the most radical principle of holographic theory: \textit{holographic proper subsets can be strictly greater than the whole set}. This revolutionary concept asserts that a part can be strictly greater than the whole, fundamentally challenging classical intuitions about containment and representing a paradigm shift from traditional set-theoretic measures.

\begin{remark}[Holographic Paradigm]
The axiom states that for holographic structures, we can have $I(M_1) > I(M_2)$ even when $M_1 \subsetneq M_2$ (proper subset). This is exemplified by:
\begin{itemize}
\item $I(P_{\text{ess}}) > I(\mathbb{N})$: The fractal prime set has strictly greater informational cardinality than the natural numbers, despite being a proper subset in a structural sense.
\item $I(P_{\text{ess}}) > I(C_{1/3})$: The fractal prime set has strictly greater informational cardinality than the Cantor set $C_{1/3}$, which has the cardinality of the continuum $\mathfrak{c}$.
\end{itemize}
\end{remark}

The axiom requires a notion of ``information content'' that is anti-monotone with respect to inclusion, reflecting how holographic encoding allows proper subsets to capture strictly more essential information than larger structures. This principle suggests that mathematical richness is not determined by size alone, but by the depth of arithmetic and geometric structure encoded within.

\subsection{Axiom (A6): Continuity}

Small changes in construction parameters (retention probabilities, scaling factors) should lead to continuous changes in $\iota$. This is related to the stability of L-function values under perturbations.

\subsection{Axiom (A7): Default Value}

Sets without known L-function connections (like $C_{1/3}$) have $\iota = 0$ by default. This makes informational cardinality computable for a wide class of objects while reserving non-zero values for arithmetically significant structures. Additionally, the trivial zeros of the Riemann zeta function (at negative even integers $s = -2, -4, -6, \ldots$) have informational cardinality $I(\text{trivial zeros}) = (0, 0, 0)$, as they lack the deep arithmetic significance of non-trivial zeros.

\section{Alternative Constructions}

\subsection{Primes Modulo 6}

Primes greater than 3 are congruent to 1 or 5 modulo 6. A fractal construction retaining positions 1 and 5 out of 6 gives:
\[
\dim_H = \frac{\log 2}{\log 6} \approx 0.387.
\]

The information measure could be defined as $\iota = -\zeta(s_0)$ where $s_0$ satisfies $\dim_H = s_0$, giving $s_0 \approx 0.387$.

\subsection{Primes Modulo 8}

Primes greater than 2 are odd, and odd primes modulo 8 are in $\{1,3,5,7\}$. Retaining 4 of 8 positions:
\[
\dim_H = \frac{\log 4}{\log 8} = \frac{2}{3}.
\]

This gives $\iota = -\zeta(2/3)$.

\subsection{Weighted Constructions}

Assign different probabilities to different residue classes based on empirical prime densities (incorporating Chebyshev bias). This creates non-uniform fractals with potentially different information measures.

\subsection{Higher-Dimensional Fractals}

Embed primes in $\mathbb{R}^2$ or higher dimensions using multiple moduli simultaneously (e.g., primes modulo 4 and modulo 3). This creates multi-dimensional fractal structures with richer geometry.

\section{Comparative Tables}

\begin{table}[h]
\centering
\begin{tabular}{|l|c|c|c|c|}
\hline
\textbf{Set} & $\alpha$ & $\delta$ & $\iota$ & $I(M)$ \\
\hline
$P_{\text{ess}}$ & 1 & $1/2$ & $-\zeta(1/2) \approx 1.46$ & $(1, 0.5, 1.46)$ \\
$C_{1/3}$ & 1 & $1/3$ & 0 & $(1, 0.333, 0)$ \\
$Z_F$ & 1 & $1/2$ & $\zeta(1/2) \approx -1.46$ & $(1, 0.5, -1.46)$ \\
$[0,1]$ & 1 & 1 & 0 & $(1, 1, 0)$ \\
Cantor set & 1 & $\log 2/\log 3 \approx 0.631$ & 0 & $(1, 0.631, 0)$ \\
\hline
\end{tabular}
\caption{Informational cardinalities of various sets}
\end{table}

\begin{table}[h]
\centering
\begin{tabular}{|l|c|c|}
\hline
\textbf{Property} & $P_{\text{ess}}$ & $Z_F$ \\
\hline
Hausdorff dimension & $1/2$ & $1/2$ \\
Cardinality & $\mathfrak{c}$ & $\mathfrak{c}$ \\
Lebesgue measure & 0 & 0 \\
Self-similar & Yes (deterministic) & Yes (data-dependent) \\
Information measure & $-\zeta(1/2) > 0$ & $\zeta(1/2) < 0$ \\
Arithmetic origin & Primes mod 4 & Zeta zeros \\
\hline
\end{tabular}
\caption{Comparison of $P_{\text{ess}}$ and $Z_F$}
\end{table}

\section{Computational Aspects}

\subsection{Computing $P_{\text{ess}}$}

The set $P_{\text{ess}}$ can be approximated to arbitrary precision by computing $I_n$ for large $n$. At stage $n$, there are $2^n$ intervals, each of length $4^{-n}$. The endpoints can be computed exactly using base-4 arithmetic.

\subsection{Computing $Z_F$}

Computing $Z_F$ requires:
\begin{enumerate}
\item Numerical computation of zeta zeros $\gamma_n$ (available in databases like LMFDB).
\item Extraction of fractional parts $t_n = \{\gamma_n/(2\pi)\}$.
\item Conversion to base-4 digits $a_n = \lfloor 4t_n \rfloor$.
\item Iterative construction of $J_n$.
\end{enumerate}

The first $10^{10}$ zeros are known, allowing computation of $J_n$ for $n \le 10^{10}$.

\subsection{Numerical Verification of Information Conservation}

To test the conjecture $\iota(P_{\text{ess}}) + \iota(Z_F) = 0$, we need:
\begin{enumerate}
\item Precise computation of $\zeta(1/2)$ (done: $\zeta(1/2) \approx -1.46035$).
\item Verification that $Z_F$ has the expected statistical properties.
\item Numerical estimation of $\iota(Z_F)$ via correlation with prime distribution.
\end{enumerate}

This remains an open computational challenge.

\end{document}